\newtheorem{theorem}{Theorem}[section]
\newtheorem{lemma}{Lemma}[section]
\newtheorem{corollary}{Corollary}[section]
\numberwithin{equation}{section}
\newcommand\mytop[2]{\genfrac{}{}{0pt}{}{#1}{#2}}
\def\expe{{\rm e}}
\def\iunit{{\rm i}}
\def\qpr#1#2#3{\left(#1;#2\right)_{#3}}
\def\Eq#1{{\rm E}_q\left(#1\right)}
\def\bigO{{\mathcal O}}
\def\qphi#1#2#3#4#5#6{{{}_{#1}\phi_{#2}}\!\left(\genfrac{}{}{0pt}{}{#3}{#4};#5;#6\right)}
\def\qpsi#1#2#3#4#5#6{{{}_{#1}\psi_{#2}}\!\left(\genfrac{}{}{0pt}{}{#3}{#4};#5;#6\right)}
\def\qhat{{\hat{q}}}
\def\Thetaq#1{{\rm \theta}_q\left(#1\right)}
\def\dThetaq#1{{\rm \theta}'_q\left(#1\right)}
\def\Pq#1{\mathcal{P}_q\left(#1\right)}
\def\dPq#1{\mathcal{P}'_q\left(#1\right)}
\def\Cq{{C_q}}
\def\Uq#1#2#3{{\rm U}_q\left(#1,#2;#3\right)}
\def\UqK#1#2#3#4{{\rm U}_{q,#1}\left(#2,#3;#4\right)}
\def\intd{{\rm d}}
\def\LqK#1{\mathcal{L}_{q,#1}}
\def\Lq{{\mathcal{L}_q}}
\def\pK#1#2#3{p_{#1}\left(#2,#3\right)}
\def\K#1{\kappa\left(#1\right)}
\def\KK{{\kappa}}
\def\Psiq#1{\Psi_q\left(#1\right)}
\def\ee{e}
\title[The $q$-Laplace Transforms compared: ${}_2\phi_0$]{The $q$-Laplace Transforms compared: 
the basic confluent hypergeometric function ${}_2\phi_0$}
\author{Daniel Meikle}
\author{Adri Olde Daalhuis}
\address{School of Mathematics and Maxwell Institute for Mathematical Sciences, University of Edinburgh, Peter Guthrie Tait Road, Edinburgh, EH9 3FD, UK}
\email{D.Meikle@ed.ac.uk}
\email{A.OldeDaalhuis@ed.ac.uk}
\date{\today}
\begin{document}
\begin{abstract}
In solving $q$-difference equations, and in the definition of $q$-special functions, we encounter formal power series 
in which the $n$th coefficient is of size $q^{-\binom{n}{2}}$ with $q\in(0,1)$ fixed. To make sense of
these formal series, a $q$-Borel-Laplace resummation is required. There are three candidates for the
$q$-Laplace transform, resulting in three different resummations. Surprisingly, the differences between these
resummations have hardly been discussed in the literature. 

Our main result provides explicit formulas for these $q$-exponentially small differences. 
We also give simple Mellin--Barnes integral representations for all the basic hypergeometric
${}_r\phi_s$ functions and derive a third (discrete) orthogonality condition for the Stieltjes--Wigert polynomials.

As the main application, we introduce three resummations for the ${}_2\phi_0$ function which can be seen as $q$ versions
of the Kummer $U$ function. We derive many of their properties, including interesting integral and sum
representations, connection formulas, and error bounds.
\end{abstract}

\maketitle


\section{Introduction and Summary}
Solving differential or difference equations via series solutions often leads to Gevrey 1 divergent series of the 
form $\hat{w}(z)=\sum_n a_nz^n$. These series have coefficients that grow like 
$\frac{K\Gamma(n+\alpha)}{F^{n+\alpha}}$ as $n\to\infty$, resulting in a growth rate of “factorial over power”.

One way to make sense of these formal series is the introduction of the Borel transform:
$B(t)=\sum_n\frac{a_n}{\Gamma(n+\alpha)} t^n$. This infinite series has a finite radius of convergence, 
and the singularities in the finite Borel plane (the complex $t$-plane) cause the Stokes phenomenon. 
Each singularity corresponds to a specific exponentially-small scale.

To return to the $z$-plane, one takes the Laplace transform, defined as 
$w_B(z)=z^{-\alpha}\int_0^\infty \expe^{-t/z}B(t)t^{\alpha-1}\intd t$. Note that any function that will have
$w(z)\sim \sum_n a_nz^n$ as $z\to 0$ in a complex sector of opening more than $\pi$ will be uniquely
determined by this divergent series, that is, $w(z)=w_B(z)$.
For more details, see, for example, \cite{LR2016,Mitschi2016}.

In contrast to Gevrey 1 asymptotics, solving $q$-difference equations, particularly in the definition of 
basic hypergeometric functions, involves a different type of divergence. 
In these problems, there will be an additional parameter $q$ and we assume $|q|<1$. In Section \ref{S:SpecialFunct}, we introduce all the special notation. This section contains many identities for theta functions.

The typical growth of coefficients in $q$-difference equations is $a_n\sim Kq^{-\frac12 n(n-1)}/F^n$, as $n\to\infty$. 
A series with these coefficients is of $q$-Gevrey order 1, see \cite{Ramis:1992:GEF}.
This divergence is much stronger, necessitating the use of different Borel and Laplace transforms. 

The $q$-Borel transform is, obviously, $B_q(t)=\sum_n a_n q^{\frac12 n(n-1)} t^n$. 
In interesting cases, it has a finite radius of convergence. The singularities in the Borel plane still 
cause the Stokes phenomenon, but now all the finite singularities are linked to the same exponentially-small scale! 
For the corresponding $q$-Laplace transform, we need to introduce alternatives for $\expe^{-t}$. They are
\begin{equation}\label{Eq}
    \Eq{\tau}=\exp\left(\frac{\ln^2(\tau/\sqrt{q})}{2\ln q}\right)=q^{\frac{1}{8}}\tau^{-\frac{1}{2}}\exp\left(\frac{\ln^2\tau}{2\ln q}\right),
\end{equation}
and the reciprocal of the theta function 
\begin{equation}\label{Thetaq}
    \Thetaq{\tau}=\qpr{q,-\tau,-q/\tau}{q}{\infty}=\sum_{n=-\infty}^{\infty}q^{\binom{n}{2}}\tau^n.
\end{equation}
Note that $\Eq{q^x}=q^{\frac12\left(x-\frac12\right)^2}$. Terms of size $\Eq{\tau}$ are defined to be
`$q$-exponentially small'. The growth of solutions of $q$-difference equations is discussed in \cite{Ramis:1992:GEF}.

In the last several decades, many versions of the $q$-Laplace transform have been introduced in the literature, see for example \cite{Tahara2017}.
We require the property that $t^n\mapsto q^{-\binom{n}{2}}z^n$. The three main $q$-Laplace transforms
with this property are the two continuous $q$-Laplace transforms defined in \cite{Ramis:1993:SDA}, \cite{Zhang1999} and \cite{Zhang2000}:
\begin{equation}\label{qLaplace1}
    (\LqK{E} B)(z)=\Cq\int_0^\infty B(t)\Eq{\frac{t}{z}}\frac{\intd t}{t}, \qquad (\LqK{\theta} B)(z)=\frac{-1}{\ln q}\int_0^\infty \frac{B(t)}{\Thetaq{\frac{t}{z}}}\frac{\intd t}{t},
\end{equation}
(with $\Cq=1/\sqrt{-2\pi\ln q}$) and the discrete $q$-Laplace transform, defined in \cite{Zhang2002}:
\begin{equation}\label{qLaplace2}
    (\LqK{\lambda} B)(z)=\sum_{n=-\infty}^{\infty}\frac{B(q^n\lambda)}{\Thetaq{\frac{q^n\lambda}{z}}},
\end{equation}
where $z\in\mathbb{C}\backslash\{-q^n\lambda;n\in\mathbb{Z}\}$.
There is no similar discrete transform in terms of the $\Eq{\tau}$ function instead of $1/\Thetaq{\tau}$.
Note that the Borel transform $B(t)$ will have singularities in the complex $t$-plane, hence,
$q^n\lambda$ should be bounded away from these singularities. In many examples, $B(t)$ has poles at
$t=-q^{-m}$, $m=0,1,2,\ldots$, and we require $\lambda\in\mathbb{C}\backslash\{-q^n;n\in\mathbb{Z}\}$. 

\begin{table}[ht]
\begin{center}
\begin{tabular}{|c|c|c|}
\hline
&$f(t)$ & $F(z)=(\LqK{\ee} f)(z)$ \\
\hline

(1)&$t^nf(t)$ & $\displaystyle q^{-\binom{n}{2}} z^nF(zq^{-n})$\\

(2)&$f(qt)$ & $\displaystyle F(qz)$\\

(3)&$f(q/t)$ & $\displaystyle F(1/z)$\\

\hline

(4)&$t^n$ & $\displaystyle q^{-\binom{n}{2}} z^n$\\

(5)&$\qpr{-at}{q}{\infty}$ & $\displaystyle\frac1{\qpr{az}{q}{\infty}}$\\

(6)&$\displaystyle \qpr{-bq/t}{q}{\infty}$ & $\displaystyle \frac{1}{\qpr{b/z}{q}{\infty}}$\\

(7)&$\displaystyle \qpr{-at}{q}{\infty}\qpr{-bq/t}{q}{\infty}$ & 
$\displaystyle \frac{\qpr{ab}{q}{\infty}}{\qpr{az}{q}{\infty}\qpr{b/z}{q}{\infty}}$\\

(8)&$\displaystyle \frac{\qpr{-at}{q}{\infty}}{\qpr{-bt}{q}{\infty}}$ & 
$\displaystyle \UqK{\ee}{0}{\frac{a}{b}}{bz}$,~~$\ee=E,\theta$\\

(9)& $\displaystyle \qphi{1}{1}{q^{-n}}{0}{q}{-tq^n}$ & $\qpr{z}{q}{n}$ \\

(10)& $\displaystyle \qphi{r}{s+1}{a_1,\ldots,a_r}{b_1,\ldots,b_s,0}{q}{-t}$ & $\displaystyle \qphi{r}{s}{a_1,\ldots,a_r}{b_1,\ldots,b_s}{q}{z}$ \\

\hline
\end{tabular}
\caption{$q$-Laplace transforms. For (5), (6) and (7) we require $b<z<a^{-1}$. Entry (7) follows from
\cite[Theorem 6.3]{Cohl2025}, and (8) follows from \eqref{UqSymmetric}. For (9), note that the Stieltjes--Wigert
polynomials are $S_n(x;q)=\frac{1}{\qpr{q}{q}{n}}\qphi{1}{1}{q^{-n}}{0}{q}{-xq^{n+1}}$ with orthogonality relations
\eqref{SW1} and \eqref{SW3}.}
\label{tab:qLaplace}
\end{center}
\end{table}%

Having three $q$-Laplace transforms will also give us three different $q$-Borel-Laplace resummations of the
original divergent series. It is very surprising that these differences have not been discussed in the literature for a general series. In \cite{DiVizio2009}, the difference between $(\LqK{\theta} B)(z)$ and $(\LqK{\lambda} B)(z)$ is computed for the specific case of a divergent $q$-Euler series.
Is there a best $q$-Borel-Laplace resummation? Note that we also lose the uniqueness property: all three
resummations will have the original divergent series as its asymptotic series 
as $z\to 0$ in complex sectors of opening more than $\pi$.

It should be obvious from these observations that the difference of two of these $q$-Borel-Laplace resummations
is $q$-exponentially small, that is, it should be of size smaller than any term in the original divergent series.
To be more precise, the difference is of size
$\Eq{z}$. The growth of this elementary function
lies in between algebraic and exponential as $z\to 0$ and $z\to \infty$.
In Section \ref{S:Laplace}, we give exact formulas for the differences. They are in terms of the functions
that are being switched on via the Stokes phenomenon. Consideration of the case of a $q$-Euler series with our results gives agreement with those presented in \cite{DiVizio2009}.

The first $q$-Borel-Laplace resummation $(\LqK{E} B)(z)$ is in terms of the elementary function $\Eq{z}$. 
The resummation is a multivalued function, having the required asymptotic expansions 
$(\LqK{E} B)(z)\sim\sum_n{a_n}z^n$ for \emph{all} $\arg z$. 
The main reason for this is that, in contrast to $\exp(-1/z)$, the function $\Eq{z}$ is exponentially small
for all $\arg z$.

The second $q$-Borel-Laplace resummation $(\LqK{\theta} B)(z)$ is in terms of a reciprocal $\theta_q$ function. 
Again, the resummation is multivalued, but now $(\LqK{\theta} B)(z)\sim\sum_n{a_n}z^n$ 
holds for $|\arg z|<2\pi$, because the reciprocal $\theta_q$ function
is switched on via the Stokes phenomenon and it has poles along $\arg z=\pm2\pi$.

Finally, the third $q$-Borel-Laplace resummation $(\LqK{\lambda} B)(z)$ has a free parameter $\lambda$. It is in terms
of an infinite series of reciprocal $\theta_q$ functions. This resummation is one-valued, and we have
$(\LqK{\lambda} B)(z)\sim\sum_n{a_n}z^n$ for $|\arg (z/\lambda)|<\pi$, because of the poles of the reciprocal
$\theta_q$ function.
The $(\LqK{\theta} B)(z)$ resummation may in fact be obtained from the $(\LqK{\lambda} B)(z)$ resummation by integrating over the free parameter $\lambda$ via
\begin{equation}\label{Intlamda}
    (\LqK{\theta} B)(z)=\frac{-1}{\ln q}\int_q^1 (\LqK{\lambda} B)(z)\,\frac{\intd \lambda}{\lambda},
\end{equation}
\cite[Theorem 4.14]{DiVizio2009}, and as a consequence
\begin{equation}\label{IntlamdaE}
    (\LqK{E} B)(z)=\frac{-1}{\ln q}\int_q^1 (\LqK{\lambda} B)(z)\,
    \Pq{\frac{\lambda}{z}}\frac{\intd \lambda}{\lambda},
\end{equation}
with $\Pq{\tau}$ defined in \eqref{qperiodic}.

One could argue that $(\LqK{\lambda} B)(z)$ is the most natural resummation because it is one-valued, but it has
poles in an artificial direction. If somehow one could link these poles to the original problem ($q$-difference equation)
then this could be the most natural resummation. The Mellin--Barnes integral representation \eqref{rphisMB_E}
for the basic hypergeometric functions, however, suggest that $(\LqK{E} B)(z)$ is the most natural.
Moreover, in a sequel to this paper, we will discuss Mellin--Barnes integral representations for all ${}_r\phi_s$. 
In that paper, it will be demonstrated that in the case of $q$-Gevrey series of order $k$ with $k>1$
(needed when $r-s-1=k>1$), the obvious $(\mathcal{L}_{q^k,\lambda}B)(z)$ resummation does not always solve the
original $q$-difference equation.

These $q$-Laplace transforms appear in the orthogonality relations 
\cite[\href{https://dlmf.nist.gov/18.27.vi}{\S 18.27.vi}]{NIST:DLMF}
for the Stieltjes--Wigert polynomials,
\begin{equation}\label{SW1}
    \Cq\int_0^\infty S_n(x;q)S_m(x;q)\Eq{x}\intd x=
    \frac{-1}{\ln q}\int_0^\infty \frac{S_n(x;q)S_m(x;q)}{\Thetaq{x}}\intd x=\frac{\delta_{n,m}}{q^n\qpr{q}{q}{n}}.
\end{equation}
In the literature, it is observed that it is remarkable that the measure is not unique. From the viewpoint
of $q$-Laplace transforms, this is not remarkable, because the three $q$-Laplace transforms have
property (4) of Table \ref{tab:qLaplace}, that is, they have the same moments.
Hence, applying each of the $q$-Laplace transforms to a polynomial
should give the same result. It follows that there is a third (discrete) orthogonality relation
\begin{equation}\label{SW3}
    \sum_{k=-\infty}^\infty\frac{S_n(q^k\lambda;q)S_m(q^k\lambda,q)q^k\lambda}{\Thetaq{q^k\lambda}}
    =\frac{\lambda}{1-q}\int_0^\infty \frac{S_n(x\lambda;q)S_m(x\lambda,q)}{\Thetaq{x\lambda}}\intd_q x
    =\frac{\delta_{n,m}}{q^n\qpr{q}{q}{n}},
\end{equation}
in which $\lambda$ is a free parameter. The second representation in \eqref{SW3} is just the first, but using the
$q$-integral notation \cite[\href{https://dlmf.nist.gov/17.2.E48}{17.2.48}]{NIST:DLMF}.

An important class of special functions are the ones that can be expressed in terms of the Kummer $U$ function.
See \cite[\href{https://dlmf.nist.gov/13.6}{\S 13.6}]{NIST:DLMF}. It has four important integral representations:
\begin{equation}\begin{split}\label{KummerU0}
     U(a,b;z)&=z^{1-a}\int_{0}^{\infty}\expe^{-zt}
     {}_2F_1\left(\mytop{a,a-b+1}{1};-t\right)  \intd t\\
     &=\frac{1}{\Gamma(a)}\int_{0}^{\infty}\expe^{-zt}t^{a-1}(1+t)^{b-a-1}\intd t\\
     &=\frac{z^{1-a}}{\Gamma(a)\Gamma(1+a-b)}\int_{0}^{\infty}
     \frac{U(b-a,b;t) \expe^{-t}t^{a-1}}{t+z} \intd t\\
     &= \frac{z^{-a}}{2\pi\iunit}\int_{-\iunit\infty}^{\iunit\infty}
    \frac{\Gamma(a+t)\Gamma(1+a-b+t)\Gamma(-t)}{\Gamma(a)\Gamma(1+a-b)}z^{-t}\intd t.
\end{split}\end{equation}
The first integral is the Borel-Laplace resummation of the divergent generalised hypergeometric series ${}_2F_0$,
compare \cite[\href{https://dlmf.nist.gov/13.6.E21}{13.6.21}]{NIST:DLMF}. The second integral representation is
probably the best known and is in terms of elementary functions. The third one is the Cauchy-Heine integral 
representation, and is very useful for error bounds and exponentially improved asymptotics, 
see \cite{OldeDaalhuis:1995:HSSa}. The fourth one is one of the Mellin--Barnes integral representations
and it contains the connection between the local behaviours near $z=\infty$ and $z=0$.

Hence, the $q$-Borel-Laplace resummation of 
the basic hypergeometric series $\qphi{2}{0}{a,b}{-}{q}{z}$ should also be an important class of functions.
Note that this formal series satisfies the $q$-difference equation
\begin{equation} \label{2phi0DiffEq}
    zy(zq^{-2})+(q-(a+b)z)y(zq^{-1})-(q-abz)y(z)=0.
\end{equation}
In the second half of the paper, we study the three $q$-Borel-Laplace transforms of $\qphi{2}{0}{a,b}{-}{q}{z}$.
In Section \ref{S:Application}, we introduce the three functions. 
They are denoted by $\UqK{\ee}{a}{b}{z}$, in which $\ee$ is either $E$, $\theta$ or $\lambda$.
In Section \ref{S:integrals}, we will show that each of the representations in \eqref{KummerU0} has a 
corresponding integral representation for $\UqK{\ee}{a}{b}{z}$ with $\ee=E,\theta$, and that, in addition, 
$\UqK{\lambda}{a}{b}{z}$ has similar sum representations. The Mellin--Barnes
integral representations particularly show that the first $q$-Borel-Laplace resummation $\UqK{E}{a}{b}{z}$ is the most natural.
The solutions of \eqref{2phi0DiffEq} near $z=\infty$ are in terms of convergent ${}_2\phi_1$ functions.
In Section \ref{S:connections}, we derive the connection relations between $\UqK{\ee}{a}{b}{z}$
and these ${}_2\phi_1$ functions. Relatively sharp error bounds are given in Section \ref{S:errorbounds},
and other properties (recurrence relations w.r.t. the parameters $a,b$) are given in Section \ref{S:other}.
In that final section, we do also include an attempt to obtain a continued fraction representation.
This continued fraction representation does not depend on the choice of $\ee$, and if it were convergent, 
it might have resolved the question of what is the most natural resummation. Unfortunately, our
continued fraction representation is not convergent.

The proofs of many of the theorems are relatively long and technical, and are moved to the Appendix.


\section{$q$-Special Functions}\label{S:SpecialFunct}
We take $0<q<1$, $\qhat=\expe^{\frac{2\pi^2}{\ln q}}$ and define the $q$-Pochhammer symbol as $\qpr{a}{q}{\infty}=\Pi_{n=0}^\infty (1-aq^n)$ with $\qpr{a}{q}{\nu}=\qpr{a}{q}{\infty}/\qpr{aq^\nu}{q}{\infty}$, $a,\nu\in\mathbb{C}$. For a positive integer $n\in\mathbb{N}$, we therefore have $\qpr{a}{q}{n}=(1-a)(1-aq)\ldots(1-aq^{n-1})$ and $\qpr{a}{q}{-n}=1/\qpr{aq^{-n}}{q}{n}$. We also use the shorthand $\qpr{a_1,a_2,\ldots,a_k}{q}{n}=\qpr{a_1}{q}{n}\qpr{a_2}{q}{n}\ldots\qpr{a_k}{q}{n}$.

We introduce the basic hypergeometric series,
\begin{equation}\label{qphi}
    \qphi{r}{s}{a_1,a_2,\ldots,a_r}{b_1,b_2,\ldots,b_s}{q}{z}=\sum_{n=0}^\infty \frac{\qpr{a_1,a_2,\ldots,a_r}{q}{n}}{\qpr{b_1,b_2,\ldots,b_s,q}{q}{n}}\left((-1)^nq^{\binom{n}{2}}\right)^{1+s-r}z^n,
\end{equation}
\begin{equation}\label{qpsi}
    \qpsi{r}{s}{a_1,a_2,\ldots,a_r}{b_1,b_2,\ldots,b_s}{q}{z}=\sum_{n=-\infty}^\infty \frac{\qpr{a_1,a_2,\ldots,a_r}{q}{n}}{\qpr{b_1,b_2,\ldots,b_s}{q}{n}}\left((-1)^nq^{\binom{n}{2}}\right)^{s-r}z^n.
\end{equation}
The radius of convergence for the basic hypergeometric series $_r\phi_s$ are $\infty$, $1$ and $0$ for $r<s+1$, $r=s+1$ and $r>s+1$ respectively.
Regarding the $r>s+1$ case: In \cite{Adachi2019} it is demonstrated that these formal series are definitely 
$q$-Borel summable. It is a very good source for more details for the general case.
For the bilateral sum $_r\psi_s$, the series converges for $|(b_1\ldots b_s)/(a_1\ldots a_rz)|<1$ when $r\leq s$ and also for $|z|<1$ when $r=s$. We also require that the $a_j$ do not take any of the values $q^{n+1}$ and the $b_j$ do not take any of the values $q^{-n}$.

Theorem \ref{thmMB} gives us Mellin--Barnes integral representations for the three resummations of ${}_2\phi_0$.
In a sequel to this paper, we will discuss Mellin--Barnes integral representations for all ${}_r\phi_s$.
The $E$-version is given below. Comparing it with \eqref{qphi} demonstrates that the $E$-resummation is the most natural resummation.
\begin{equation}
\begin{split}\label{rphisMB_E}
    \qphi{r}{s}{a_1,\ldots,a_r}{b_1,\ldots,b_s}{q}{z}=
    \frac{-1}{2\pi\iunit}\int_{\mathcal{C}}\frac{\qpr{a_1,\ldots,a_r}{q}{\tau}}{\qpr{b_1,\ldots,b_s,q}{q}{\tau}}\frac{\pi(-z)^\tau}{\sin\pi\tau}\left((-1)^\tau q^{\binom{\tau}{2}}\right)^{1+s-r}\intd\tau.
\end{split}
\end{equation}
The contour $\mathcal{C}$ separates the poles at $\tau=0,1,2,\ldots$, from the poles at
$\tau+\frac{\ln a_j}{\ln q}=-m-n\frac{2\pi\iunit}{\ln q}$, $m\in\mathbb{N}_0$, and $n\in\mathbb{Z}$.
In the case $s\leq r-1$, the contour $\mathcal{C}$ is taken to run from $-\iunit\infty$ to $\iunit\infty$,
and in the case $s>r-1$, it starts at $-\iunit d+\infty$, encircles the origin once in the negative sense,
and returns to $\iunit d+\infty$ with $d>0$. These contours may be seen pictorially in cases (i) and (ii) in 
\cite[\href{https://dlmf.nist.gov/16.17.F1}{Fig.~16.17.1}]{NIST:DLMF}. 
This contour integral representation is given in \cite[(4.5.1)]{Gasper2004} for the case of $s=r-1$.
By pushing the contour over the sequence of the poles along the positive real axis, we see that we obtain 
the series representation \eqref{qphi}. For $s\leq r-1$, we may also push the integration contour over 
the poles in the negative real half-plane to obtain a connection formula with the solutions at infinity.

The $q$-Laplace transforms were introduced in \eqref{qLaplace1} and \eqref{qLaplace2}.
Both $F(\tau)=\Eq{\tau}$ and $F(\tau)=1/\Thetaq{\tau}$ have the properties
\begin{equation} \label{Fprop}
    F(\tau^{-1})=\tau F(\tau), \qquad F(q^n\tau)=\tau^n q^{\binom{n}{2}}F(\tau).
\end{equation}
The second property above holds for $n\in\mathbb{C}$ for $\Eq{\tau}$ and $n\in\mathbb{Z}$ for $1/\Thetaq{\tau}$. With $\Cq=1/\sqrt{-2\pi\ln q}$, we have
\begin{equation}\label{BLpower}
    \Cq \int_0^\infty t^{n-1}\Eq{\frac{t}{z}}\intd t=q^{-\binom{n}{2}}z^n,\qquad \frac{-1}{\ln q}\int_0^\infty \frac{t^{n-1}}{\Thetaq{\frac{t}{z}}} \intd t=q^{-\binom{n}{2}}z^n.
\end{equation}
From the observation above, we see that the first equation in \eqref{BLpower} holds for $n\in\mathbb{C}$,
whereas for the second equation in \eqref{BLpower}, we need $n\in\mathbb{Z}$. For $n\in\mathbb{C}\setminus\mathbb{Z}$,
see \eqref{BLpowerGeneral}.

In this paper, we denote
\begin{equation}\label{X}
\begin{split}
    &\K{t}=\Cq \Eq{t}\quad{\rm if}\quad \ee=E,\\
    &\K{t}=\frac{-1}{\ln(q)\Thetaq{t}}\quad{\rm if}\quad \ee=\theta.
\end{split}
\end{equation}
Thus, $\int_0^\infty t^{n-1}\K{\frac{t}{z}}\intd t=q^{-\binom{n}{2}}z^n$.

In the proofs of the main theorems, we will use the following identities.
\begin{lemma}\label{lemmaparfrac}
For $x,y,x/z,x/y\notin \left\{0,-1,-q^{\pm1},-q^{\pm2},\ldots \right\}$, we have
\begin{align} 
    \frac{1}{\Thetaq{x}}&=\frac{1}{\qpr{q}{q}{\infty}^3}
    \sum_{n=-\infty}^\infty 
    \frac{\left(-1\right)^nq^{\binom{n}{2}}}{x+q^{-n}},\label{PartFrac1}\\
    \frac{\Thetaq{ax}}{\Thetaq{x}}&=
    \frac{\Thetaq{-a}}{\qpr{q}{q}{\infty}^3}
    \sum_{n=-\infty}^\infty 
    \frac{a^n}{1+x q^{n}},\qquad\qquad q<|a|<1,\label{PartFrac2}\\
    \frac{\Thetaq{ax}}{\Thetaq{x}}&=
    \frac{-\pi\Thetaq{-a}}{\qpr{q}{q}{\infty}^3\ln q}
    \sum_{n=-\infty}^\infty 
    \frac{x^{-(2n\pi\iunit+\ln a)/\ln q}}{\sin\left(\frac\pi{\ln q}(2n\pi\iunit+\ln a)\right)},\label{PartFrac2a}\\
    \frac{x\dThetaq{x}}{\Thetaq{x}}-\frac{y\dThetaq{y}}{\Thetaq{y}}
    &=\sum_{n=-\infty}^\infty \left(\frac{x}{x+q^n}-\frac{y}{y+q^n}\right),\label{lemmaparfrac1}\\
    \left(\frac{x\dThetaq{x}}{\Thetaq{x}}\right)'
    &=\sum_{n=-\infty}^\infty \frac{q^n}{\left(x+q^n\right)^2},\label{lemmaparfrac2}
\end{align}
\begin{equation}\label{lemmaparfrac3}
\begin{split}
    &\frac{x\dThetaq{x}}{\Thetaq{x}}
    -\frac{x\dThetaq{x/z}}{z\Thetaq{x/z}}
    -\frac{y\dThetaq{y}}{\Thetaq{y}}
    +\frac{y\dThetaq{y/z}}{z\Thetaq{y/z}}\\
    &\qquad\qquad\qquad\qquad\qquad\qquad=\frac{\qpr{q}{q}{\infty}^3\Thetaq{-z}\Thetaq{\frac{-x}{y}}
    \Thetaq{\frac{-xy}{z}}}{\Thetaq{x}\Thetaq{y}
    \Thetaq{\frac{x}{z}}\Thetaq{\frac{z}{y}}}.
\end{split}\end{equation}
\end{lemma}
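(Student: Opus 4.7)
The plan is to prove all six identities via a common strategy: match poles and residues, verify quasi-periodicity, and conclude by a Liouville-type argument. Throughout, I would rely on the triple-product form of $\Thetaq{x}$ and the derived identities $\Thetaq{qu}=u^{-1}\Thetaq{u}$, $\Thetaq{1/u}=u^{-1}\Thetaq{u}$, and $\DThetaq{-q^n}=(-1)^n q^{-\binom{n}{2}-n}\qpr{q}{q}{\infty}^3$ for all $n\in\mathbb{Z}$, the last obtained by direct logarithmic differentiation of the triple product at $n=0$ and extended to all $n$ by quasi-periodicity.

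For \eqref{PartFrac1}, both sides are meromorphic with simple poles at $x=-q^{-n}$, and the residue of $1/\Thetaq{x}$ there is $(-1)^n q^{\binom{n}{2}}/\qpr{q}{q}{\infty}^3$ by the formula for $\DThetaq{-q^{-n}}$, matching the RHS termwise. Both sides obey $h(qx)=xh(x)$: for the RHS this is the reindexing $n\mapsto n+1$ combined with $\sum_n(-1)^n q^{\binom{n}{2}}=\Thetaq{-1}=0$. The difference is thus entire and satisfies the same equation, so multiplying by $\Thetaq{x}$ produces an entire $q$-periodic function, forced to vanish by iterating $x\mapsto q^n x\to 0$. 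Identity \eqref{PartFrac2} is proved identically with $h(qx)=h(x)/a$; the iteration $h(x)=a^n h(q^n x)\to 0$ uses $|a|<1$. Identity \eqref{PartFrac2a} is either obtained from \eqref{PartFrac2} by a Sommerfeld--Watson argument (integrating $\pi a^w/((\sin\pi w)(1+xq^w))$ around a large rectangle and collecting residues at the zeros $w=(\ln(-1/x)+2\pi\iunit n)/\ln q$ of $1+xq^w$), or recognized as the Fourier series in $\ln x$ of the $q$-periodic function $x^{\ln a/\ln q}\Thetaq{ax}/\Thetaq{x}$.

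Identity \eqref{lemmaparfrac1} comes from logarithmic differentiation of the triple product. Setting $L(x):=x\DThetaq{x}/\Thetaq{x}$, one has $L(x)=\sum_{k\ge 0}xq^k/(1+xq^k)-\sum_{k\ge 0}q^{k+1}/(x+q^{k+1})$; forming $L(x)-L(y)$ and reindexing via $n=-k$ in the first sum and $n=k+1$ in the second yields the absolutely convergent bilateral sum on the RHS of \eqref{lemmaparfrac1}. Identity \eqref{lemmaparfrac2} is then termwise $x$-differentiation, justified by uniform convergence on compact sets avoiding the poles.

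The four-theta identity \eqref{lemmaparfrac3} is the most intricate. I would view both sides as meromorphic functions of $x$ with $y,z$ generic: both have simple poles exactly at $x=-q^n$ and $x=-zq^n$, $n\in\mathbb{Z}$, and both are invariant under $x\mapsto qx$ (for the LHS via the functional equation $L(qx)=L(x)-1$, obtained from differentiating $\Thetaq{qx}=x^{-1}\Thetaq{x}$; for the RHS from direct application of the quasi-periodicities to the four theta factors). The residue match at $x=-q^n$ is the main step: on the LHS only $L(x)$ contributes, with residue $-q^n$; on the RHS, using the closed form for $\DThetaq{-q^n}$ together with $\Thetaq{q^n u}=q^{-\binom{n}{2}}u^{-n}\Thetaq{u}$ and the flip $\Thetaq{y/z}=(y/z)\Thetaq{z/y}$, a routine simplification yields $-q^n$. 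The match at $x=-zq^n$ is parallel. The difference is then entire and $q$-periodic, hence constant; setting $x=y$ makes both sides manifestly zero (the RHS because $\Thetaq{-1}=0$), so the constant vanishes. The residue bookkeeping in this last identity is the main obstacle; the rest is routine.
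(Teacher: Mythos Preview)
Your approach is correct and takes a genuinely different, more uniform route than the paper. The paper proves \eqref{PartFrac1} by rotating the contour in $\int_0^\infty\frac{\intd z}{(z-x)\Thetaq{z}}$, quotes \eqref{PartFrac2} from the literature, obtains \eqref{PartFrac2a} via a Mellin--Barnes integral (essentially your Sommerfeld--Watson), derives \eqref{lemmaparfrac1} by computing the truncated logarithmic derivative of $\Thetaq{\tau;N}=\qpr{q,-\tau,-q/\tau}{q}{N}$ and letting $N\to\infty$, gets \eqref{lemmaparfrac2} as $\lim_{y\to x}\frac{\eqref{lemmaparfrac1}}{x-y}$, and, most notably, proves \eqref{lemmaparfrac3} by writing $z=\expe^{2\iunit\zeta}$ and arguing that both sides are elliptic functions of $\zeta$ of order $2$ with the same poles, zeros and residues. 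Your strategy---match residues, verify the functional equation, and kill the difference via a Laurent/Liouville argument---is more self-contained and treats all six identities on the same footing; in particular, your proof of \eqref{lemmaparfrac3} via $q$-periodicity in $x$ (rather than ellipticity in $z$) avoids invoking the theory of elliptic functions and the residue computation you outline does go through.

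One point to tighten. In your argument for \eqref{PartFrac1} you write that the product $g(x)=\Thetaq{x}D(x)$ is ``an entire $q$-periodic function, forced to vanish by iterating $x\mapsto q^n x\to 0$''. This last phrase does not do what you want: $g$ lives on $\mathbb{C}^*$, and a holomorphic $q$-periodic function there need not vanish (e.g.\ any nonzero constant). The correct conclusion is that a holomorphic $q$-periodic function on $\mathbb{C}^*$ has Laurent expansion $\sum_k c_k x^k$ with $c_k(q^k-1)=0$, hence is \emph{constant}; to see that the constant is zero, evaluate at $x=-1$, where $\Thetaq{-1}=0$ and $D$ is regular. You handle this correctly for \eqref{lemmaparfrac3} (``hence constant; setting $x=y$\ldots''), so just mirror that here. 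For \eqref{PartFrac2} the Laurent argument is even cleaner: $c_k(aq^k-1)=0$ forces $c_k=0$ directly since $q<|a|<1$ rules out $a\in q^{\mathbb{Z}}$; your phrasing ``$h(x)=a^n h(q^n x)\to 0$'' implicitly assumes $h$ is bounded near the origin, which you have not established.
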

\begin{proof}
Expansion \eqref{PartFrac1} can be obtained by considering the integral 
$\int_0^\infty\frac{\intd z}{(z-x)\Thetaq{z}}$ and rotating the contour of integration by $2\pi$.
Equation \eqref{PartFrac2} follows from \cite[Eq.(10.6.1)]{AAR1999}.
For \eqref{PartFrac2a}, we start with the constraints $x,a\in(q,1)$ and consider the residues of the
Mellin--Barnes type integral
\begin{equation}\label{tempMB}
\begin{split}
    &\frac{-\pi}{2\pi\iunit}\left(\int_{M+\frac12-\iunit\infty}^{M+\frac12+\iunit\infty}-
    \int_{-N-\frac12-\iunit\infty}^{-N-\frac12+\iunit\infty}\right)
    \frac{x^\tau}{\left(1-aq^\tau\right)\sin\pi\tau}\intd\tau\\
    &\qquad\qquad=
    \frac{-\pi}{\ln q}\sum_{n=-\infty}^\infty 
    \frac{x^{-(2n\pi\iunit+\ln a)/\ln q}}{\sin\left(\frac\pi{\ln q}(2n\pi\iunit+\ln a)\right)}
    -\sum_{m=-N}^M \frac{\left(-x\right)^m}{1-aq^m}.
\end{split}\end{equation}
We push $M,N\to\infty$. The left-hand side of \eqref{tempMB} vanishes, and the final term of \eqref{tempMB}
can be expressed in the required quotient of theta functions via \eqref{PartFrac2}.

Let $N$ be a positive integer and define $\Thetaq{\tau;N}=\qpr{q,-\tau,-q/\tau}{q}{N}$. 
The reader can check that
\begin{equation*}
    \frac{x\dThetaq{x;N}}{\Thetaq{x;N}}-\frac{y\dThetaq{y;N}}{\Thetaq{y;N}}
    =\sum_{n=1-N}^N \left(\frac{x}{x+q^n}-\frac{y}{y+q^n}\right).
\end{equation*}
Now let $N\to\infty$. For \eqref{lemmaparfrac2}, consider $\lim_{y\to x}\frac{\eqref{lemmaparfrac1}}{x-y}$.

We take $z=\expe^{2\iunit \zeta}$. 
As functions of $\zeta$, both expressions of \eqref{lemmaparfrac3} are elliptic
functions of order 2 with periods $\pi$ and $\frac{\ln q}{2\iunit}$. It is easy to show that both expressions 
of \eqref{lemmaparfrac3} have the same poles, zeros, and the same residues at the poles. 
Hence, the quotient of both sides is constant, and because of the same residues, this constant is unity.
\end{proof}

An additional identity obtained for the ratio of $\Eq{\tau}$ functions is
\begin{equation} \label{Eqab}
\frac{\Eq{at}\Eq{bt}}{\Eq{t}\Eq{abt}}=\expe^{-\ln(a)\ln(b)/\ln(q)}.
\end{equation}

A function $f$ with the property $f(q\tau)=f(\tau)$ is called $q$-periodic.
We define the $q$-periodic function (recall $\qhat=\expe^{\frac{2\pi^2}{\ln q}}$)
\begin{equation}\label{qperiodic}
\Pq{\tau}=-\ln(q)\Cq\Eq{\tau}\Thetaq{\tau}=
\qpr{\qhat^2}{\qhat^2}{\infty}
    \exp\left(\sum_{n=1}^\infty\frac{\cos\left(2\pi n\frac{\ln\tau}{\ln q}\right)}{n%
    \sinh\left(n\ln \qhat\right)}
    \right).
\end{equation}
This expansion is an alternative version of \cite[(3.7)]{Joshi2025}.
We also have
\begin{equation} \label{Pq}
    \Pq{q^t}=-\ln(q)\Cq \sum_{n=-\infty}^\infty q^{\frac12\left(t-\frac12+n\right)^2},
\end{equation}
with Fourier series,
\begin{equation} \label{PqFourier}
    \Pq{q^t}=1+2\sum_{n=1}^\infty \left(-1\right)^n\qhat^{n^2} \cos(2n\pi t)=\sum_{n=-\infty}^\infty (-1)^n\qhat^{n^2}\expe^{2n\pi\iunit t}=\theta_4(\pi t,\qhat).
\end{equation}
This clearly demonstrates that $\Pq{\tau}\approx 1$ for $q$ bounded away from $0$.
It follows from Fourier series \eqref{PqFourier} that
\begin{equation} \label{PqInt}
    \frac{-1}{\ln q}\int_q^1 \Pq{z\tau}\frac{\intd\tau}\tau=\int_0^1\Pq{zq^t}\intd t
    =\int_0^1\Pq{q^{t+\frac{\ln z}{\ln q}}}\intd t=\int_0^1\Pq{q^t}\intd t=1.
\end{equation}
We may also derive, from \cite[Example 14, p.\ 489]{WW1927},
\begin{equation}\label{PqReciprocal}
\frac1{\Pq{q^t}}=\frac1{\qpr{\qhat^2}{\qhat^2}{\infty}^3}
\left(\widetilde{a}_0+2\sum_{n=1}^\infty \widetilde{a}_n\qhat^n\cos(2n\pi t)\right),
\end{equation}
with
\begin{equation}\label{tan}
    \widetilde{a}_n=\sum_{m=0}^\infty \left(-1\right)^m
    \qhat^{m(m+2n+1)}
    =\qphi{1}{1}{\qhat^2}{0}{\qhat^2}{\qhat^{2n+2}}.
\end{equation}
These coefficients have the following recurrence and normalising relations,
\begin{equation}
    \qhat^{2n+2}\widetilde{a}_{n+1}+\widetilde{a}_n=1,\qquad 
    \sum_{n=0}^\infty \frac{\left(-1\right)^n\qhat^{n(n+1)}\widetilde{a}_n}{
    \qpr{\qhat^2}{\qhat^2}{n}}=\qpr{\qhat^2}{\qhat^2}{\infty}^2.
\end{equation}
The function $\Eq{\tau}$ and hence $\Pq{\tau}$ are multi-valued:
\begin{equation}
    \frac{\Eq{\tau\expe^{2\pi\iunit}}}{\Eq{\tau}}=\frac{\Pq{\tau\expe^{2\pi\iunit}}}{\Pq{\tau}}=
    -\qhat^{-1}\tau^{\frac{2\pi\iunit}{\ln q}}.
\end{equation}
Using the translation of the argument by a half-period, see
\cite[\href{http://dlmf.nist.gov/20.2.E10}{20.2.10}]{NIST:DLMF}, we obtain
\begin{equation}\label{PqMinus}
\Pq{\expe^{\pi\iunit}q^t}
=-\iunit\expe^{\pi\iunit t}\qhat^{-\frac14}\theta_1(\pi t,\qhat).
\end{equation}
The function $\Pq{t}$ is the bridge between the two continuous $q$-Laplace transforms:
\begin{equation}
\begin{split}
    (\LqK{E} B)(z)=\Cq\int_0^\infty B(t) \Eq{\frac{t}{z}}\, \frac{\intd t}{t}&=
    \frac{-1}{\ln q}\int_0^\infty \frac{B(t)}{\Thetaq{\frac{t}{z}}} 
    \Pq{\frac{t}{z}}\, \frac{\intd t}{t},\\
    (\LqK{\theta} B)(z)=\frac{-1}{\ln q}\int_0^\infty \frac{B(t)}{\Thetaq{\frac{t}{z}}} 
    \, \frac{\intd t}{t}&=
    \Cq\int_0^\infty \frac{B(t) \Eq{\frac{t}{z}}}{\Pq{\frac{t}{z}}}\, \frac{\intd t}{t}.
\end{split}
\end{equation}
Finally, we observe that a quotient of two $\theta_q$ functions is a simple power times a
$q$-periodic function.
\begin{equation}
    \frac{\Thetaq{ax}}{\Thetaq{bx}}=x^{\frac{\ln(b/a)}{\ln q}}\frac{\Eq{b}\Pq{ax}}{\Eq{a}\Pq{bx}}.
\end{equation}


\section{$q$-Laplace Transforms}\label{S:Laplace}
For a series $B(t)=\sum_{n=0}^\infty a_nt^n$, the $q$-Laplace transform is formally defined as 
$(\Lq B)(z)=\sum_{n=0}^\infty a_nq^{-\binom{n}{2}}z^n$, that is, the three $q$-Laplace transforms give
rise to the same divergent asymptotic expansions, but with different sectors of validity. 
Hence, the difference between two of these
$q$-Laplace transforms should be $q$-exponentially small. The $q$-Laplace transform $(\LqK{\lambda} B)(z)$
is typically one-valued, whereas $(\LqK{E} B)(z)$ and $(\LqK{\theta} B)(z)$ are typically multi-valued. 
Let us assume the simplest case in which they have connection formulas of the form
\begin{equation}\label{connectionsimple}
\begin{split}
    (\LqK{E} B)(z\expe^{2\pi\iunit})&=(\LqK{E} B)(z)-2\pi\iunit\Cq S(z)\Eq{z\expe^{\pi\iunit}},\\
    (\LqK{\theta} B)(z\expe^{2\pi\iunit})&=(\LqK{\theta} B)(z)+\frac{2\pi\iunit S(z)}{\ln (q)\Thetaq{-z}},
\end{split}
\end{equation}
with $S(z)$ an entire function. The final terms in \eqref{connectionsimple} are the $q$-exponentially small
terms that will be switched on via the Stokes phenomenon.
The growth of $\Eq{z}$ lies in between algebraic and exponential as $z\to 0$ and $z\to \infty$.
The function $1/\Thetaq{z}$ has the same growth rate as long as we stay away from the poles.
One big difference with normal exponentials is that $\Eq{z}$ is $q$-exponentially small on all Riemann
sheets. Hence, in the case of $(\LqK{E} B)(z)$, the Stokes phenomenon switches on terms that will be
$q$-exponentially small on all Riemann sheets. For $(\LqK{\theta} B)(z)$, the switched-on
terms will be $q$-exponentially small until we encounter the poles of $1/\Thetaq{-z}$.
These observations result in the following sectors of validity:
\begin{equation}
    \begin{split}
        (\LqK{E} B)(z)&\sim \sum_{n=0}^\infty a_nq^{-\binom{n}{2}}z^n,\qquad
        {\rm for~all~}\arg z\\
        (\LqK{\theta} B)(z)&\sim \sum_{n=0}^\infty a_nq^{-\binom{n}{2}}z^n,\qquad
        |\arg z|<2\pi,\\
        (\LqK{\lambda} B)(z)&\sim \sum_{n=0}^\infty a_nq^{-\binom{n}{2}}z^n,\qquad
        |\arg (z/\lambda)|<\pi,
    \end{split}
\end{equation}
as $|z|\to 0$. In Section \ref{S:errorbounds}, we will illustrate this for the case of $\qphi{2}{0}{a,b}{-}{q}{z}$.

In the following theorem, we give an exact formula for the difference of two $q$-Laplace transforms.
The function $S(z)$ that is switched on via the Stokes phenomenon plays a crucial role.
The proof is long and is moved to the Appendix.

\begin{theorem}\label{thmCompareqL}
Let $S(t)$ be an entire function of order and type zero with the additional constraint that
there exist $M>0$ and $c\in(0,1)$ such that $|S(t)|\leq M\qpr{-c|t|}{q}{\infty}$ for all $t\in\mathbb{C}$.
If we have the Cauchy-Heine integral representation
\begin{equation}
    (\LqK{\theta} B)(z)=\frac{-1}{\ln q}\int_0^\infty \frac{S(-t)}{\Thetaq{t}(t+z)} 
    \, \intd t,
\end{equation}
or equivalently, if connection formulas \eqref{connectionsimple} hold,
then
\begin{align}
    (\LqK{\theta} B)(z)-(\LqK{E} B)(z)&=\frac{S(z)}{\Thetaq{-z}}P_q^{(c)}(z),\label{Compare1}\\
    (\LqK{\theta} B)(z)-(\LqK{\lambda} B)(z)&=\frac{S(z)}{\Thetaq{-z}}P_q^{(d)}(z;\lambda)\label{Compare2},
\end{align}
in which
\begin{align}
    P_q^{(d)}(z;\lambda)&=\frac{\lambda\dThetaq{\lambda}}{\Thetaq{\lambda}}
    -\frac{\lambda\dThetaq{\lambda/z}}{z\Thetaq{\lambda/z}}+\frac{\ln z}{\ln q}
    =\frac{\lambda\dPq{\lambda}}{\Pq{\lambda}}-\frac{\lambda\dPq{\frac{\lambda}{z}}}{z\Pq{\frac{\lambda}{z}}},\label{Pqd1}\\
    P_q^{(c)}(z)&=\frac{-1}{\ln q}\int_{q}^1 \Pq{\frac{t}{z}}
    P_q^{(d)}(z;t)\frac{\intd t}{t}\label{Pqc0}\\
    &=\frac{\ln z}{\ln q}+\frac{1}{z\ln q}\int_{q}^1 \Pq{t}\frac{\dThetaq{t/z}}{\Thetaq{t/z}}
    \intd t=\frac{1}{z\ln q}\int_q^1\frac{\Pq{t}\dPq{\frac{t}{z}}}{\Pq{\frac{t}{z}}}\intd t\label{PqcPq}\\
    &=\frac{-2\pi}{\ln q}\sum_{n=1}^\infty \left(-1\right)^n\qhat^{n^2}
    \frac{\sin\left(2\pi n\frac{\ln z}{\ln q}\right)}{\sinh\left(n\ln \qhat\right)}\label{Pqc1},
\end{align}
\begin{align}
     P_q^{(d)}(z;\lambda_1)-P_q^{(d)}(z;\lambda_2)&=\frac{\lambda_1\dThetaq{\lambda_1}}{\Thetaq{\lambda_1}}
    -\frac{\lambda_1\dThetaq{\lambda_1/z}}{z\Thetaq{\lambda_1/z}}
    -\frac{\lambda_2\dThetaq{\lambda_2}}{\Thetaq{\lambda_2}}
    +\frac{\lambda_2\dThetaq{\lambda_2/z}}{z\Thetaq{\lambda_2/z}}\nonumber\\
    &=\frac{\qpr{q}{q}{\infty}^3\Thetaq{-z}\Thetaq{\frac{-\lambda_1}{\lambda_2}}
    \Thetaq{\frac{-\lambda_1\lambda_2}{z}}}{\Thetaq{\lambda_1}\Thetaq{\lambda_2}
    \Thetaq{\frac{\lambda_1}{z}}\Thetaq{\frac{z}{\lambda_2}}},\label{Pqd2}
\end{align}
are $q$-periodic functions. The right-hand sides of \eqref{Compare1} and \eqref{Compare2} have removable
singularities at $z=q^m$, $m\in\mathbb{Z}$:
\begin{align}
    \lim_{z\to q^m}\frac{P_q^{(c)}(z)}{\Thetaq{-z}}&=
    \frac{\left(-1\right)^m q^{\binom{m}{2}}}{\qpr{q}{q}{\infty}^3}\left(\frac{2\pi}{\ln q}\right)^2
    \sum_{n=1}^\infty \left(-1\right)^n\qhat^{n^2}
    \frac{n}{\sinh\left(n\ln \qhat\right)},\label{Compare1a}\\
    \lim_{z\to q^m}\frac{P_q^{(d)}(z;\lambda)}{\Thetaq{-z}}&=
    \frac{\left(-1\right)^{m-1} q^{\binom{m}{2}}}{\qpr{q}{q}{\infty}^3}
    \left(\frac1{\ln q}+\sum_{n=-\infty}^\infty\frac{\lambda q^n}{\left(1+\lambda q^n\right)^2}
    \right)\label{Compare2a}\\
    &=\frac{\left(-1\right)^{m-1} q^{\binom{m}{2}}}{\qpr{q}{q}{\infty}^3}
    \left(\frac1{\ln q}+\lambda\frac{\intd}{\intd \lambda}\left(\frac{\lambda\dThetaq{\lambda}}{\Thetaq{\lambda}}
    \right)\right)\nonumber.
\end{align}
\end{theorem}

Note that the periodic functions $P_q^{(c)}(z)$ and $P_q^{(d)}(z;\lambda)$ do not depend on $B(t)$
and $S(z)$. In \cite{DiVizio2009} the $\LqK{\theta}-\LqK{\lambda}$ difference of the resummation 
of the $q$-Euler series is discussed. Their result is equivalent to \eqref{Compare2} with $S(z)=1$
and the final representation for $P_q^{(d)}(z;\lambda)$ in \eqref{Pqd1}. We note that our
presentation for $P_q^{(d)}(z;\lambda)$ seems much simpler.

The requirement that $S(z)$ is an entire function might seem strong. In a sequel to this
paper, we will demonstrate that the same universal functions $P_q^{(c)}(z)$ and $P_q^{(d)}(z;\lambda)$
will show up when we discuss the asymptotics of solutions of three term $q$-difference equations.
These periodic functions have simple connection formulas:

\begin{corollary}\label{crllPqcdStokes}
The $P_q^{(c)}(z)$ and $P_q^{(d)}(z;\lambda)$ functions have the connection formulas
\begin{equation}\label{PqcStokes}
    P_q^{(c)}(z\expe^{2\pi\iunit})-P_q^{(c)}(z)=-\frac{2\pi\iunit}{\ln q}\left(\Pq{z\expe^{\pi\iunit}}-1\right),
\end{equation}
\begin{equation}\label{PqdStokes}
    P_q^{(d)}(z\expe^{2\pi\iunit};\lambda)-P_q^{(d)}(z;\lambda)=\frac{2\pi\iunit}{\ln q}.
\end{equation}
\end{corollary}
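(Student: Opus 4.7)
The two identities have very different flavors, so I would handle them separately, starting with the easier one.

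For \eqref{PqdStokes}, I would simply work from the closed form \eqref{Pqd1}. The key observation is that $\Thetaq{\tau}=\qpr{q,-\tau,-q/\tau}{q}{\infty}$ and its derivative $\DThetaq{\tau}$ are single-valued functions of $\tau$ (the only singularity is a simple pole at $\tau=0$ coming from $\qpr{-q/\tau}{q}{\infty}$, and no branch cuts appear). Consequently, under $z\mapsto z\expe^{2\pi\iunit}$, the terms $\lambda\DThetaq{\lambda}/\Thetaq{\lambda}$ and $\lambda\DThetaq{\lambda/z}/(z\Thetaq{\lambda/z})$ are invariant; the only contribution comes from $\ln z/\ln q$, which shifts by $2\pi\iunit/\ln q$. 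This yields \eqref{PqdStokes} immediately.

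For \eqref{PqcStokes}, I would start from the trigonometric Fourier-type series \eqref{Pqc1}. Writing $a_n := 2\pi n\ln z/\ln q$, so that $a_n$ changes to $a_n+4\pi^2 n\iunit/\ln q = a_n+2n\iunit\ln\qhat$ under $z\mapsto z\expe^{2\pi\iunit}$, the sum-to-product identity gives
\begin{equation*}
\sin(a_n+2n\iunit\ln\qhat)-\sin(a_n)=2\iunit\sinh(n\ln\qhat)\,\cos(a_n+n\iunit\ln\qhat).
\end{equation*}
Inserting this into \eqref{Pqc1}, the hyperbolic sine in the denominator cancels cleanly and I obtain
\begin{equation*}
P_q^{(c)}(z\expe^{2\pi\iunit})-P_q^{(c)}(z)=-\frac{4\pi\iunit}{\ln q}\sum_{n=1}^\infty (-1)^n\qhat^{n^2}\cos(a_n+n\iunit\ln\qhat).
\end{equation*}
The final step is to recognise the right-hand side as $-\frac{2\pi\iunit}{\ln q}(\Pq{z\expe^{\pi\iunit}}-1)$. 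For this I would evaluate the Fourier series \eqref{PqFourier} at $t=(\ln z+\pi\iunit)/\ln q$; since $2\pi^2/\ln q=\ln\qhat$, this substitution produces exactly $\cos(a_n+n\iunit\ln\qhat)$, and the factor of $2$ from \eqref{PqFourier} absorbs the discrepancy between $4\pi\iunit$ and $2\pi\iunit$.

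I expect the main potential pitfall to be bookkeeping in the second part: one must be careful that the branch of $\ln z$ is chosen consistently, that $\Pq{z\expe^{\pi\iunit}}$ really is given by the Fourier series at the shifted argument (rather than a $\theta_1$-type rewriting as in \eqref{PqMinus}, which is useful elsewhere but not here), and that the cancellation of $\sinh(n\ln\qhat)$ is legitimate for every $n\geq 1$ (it is, since $\qhat<1$ implies $\ln\qhat<0$ but nonzero). Once these signs and factors of $2$ line up, both corollaries follow in a few lines.
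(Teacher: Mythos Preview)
Your proposal is correct and follows essentially the same approach as the paper. For \eqref{PqdStokes} the paper simply says it is ``easily obtained from \eqref{Pqd1}'', exactly as you argue; for \eqref{PqcStokes} the paper also works from the series \eqref{Pqc1} and the Fourier series \eqref{PqFourier}, the only cosmetic difference being that the paper expands $\sin(a_n+2n\iunit\ln\qhat)$ via $\cosh/\sinh$ double-angle identities and then recombines, whereas your sum-to-product step reaches the same expression $2\iunit\sinh(n\ln\qhat)\cos(a_n+n\iunit\ln\qhat)$ in one line.
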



\section{Application: the simplest divergent basic hypergeometric function}\label{S:Application}

The simplest divergent basic hypergeometric $_r\phi_s$ series is the $_2\phi_0$ series. 
The formal $_2\phi_0$ series will have three $q$-Borel-Laplace transforms denoted by
$\UqK{\ee}{a}{b}{z}$, in which $\ee$ is either $E$, $\theta$ or $\lambda$.
More explicitly,
\begin{align}
    \UqK{\ee}{a}{b}{z}&= \int_0^{\infty}
    \qphi{2}{1}{a,b}{0}{q}{-t}\K{\frac{t}{z}}\frac{\intd t}{t},\label{2phi0BorelLaplaceK}\\
    \UqK{\lambda}{a}{b}{z}&=\sum_{n=-\infty}^\infty 
    \frac{\qphi{2}{1}{a,b}{0}{q}{-q^n\lambda}}{\Thetaq{\frac{q^n\lambda}{z}}},\label{2phi0BorelLaplaceL}
\end{align}
with the notation in \eqref{X}.
We do use the ${\rm U}$ notation to indicate that this
function is a $q$-version of the Kummer $U$ function which has the representation
$U(a,b;z)=z^{-a}{}_2F_0\left(\mytop{a,a-b+1}{-};-z^{-1}\right)$, see
\cite[\href{http://dlmf.nist.gov/13.6.E21}{13.6.21}]{NIST:DLMF}.
Hence, the $\UqK{\ee}{a}{b}{z}$ should represent an important class of special functions.

The $q$-difference equation satisfied by $\qphi{2}{0}{a,b}{-}{q}{z}$ and $\UqK{\ee}{a}{b}{z}$ is \eqref{2phi0DiffEq}.
An independent second solution to \eqref{2phi0DiffEq} near $z=0$ is 
\begin{equation}\label{second}
\begin{split}
    y_2(z)&=\K{-qz}\qpr{abz}{q}{\infty}\qphi{2}{1}{\frac{q}{a},\frac{q}{b}}{0}{q}{abz}\\
    &=\K{-qz}\qpr{bqz}{q}{\infty}\qphi{1}{1}{\frac{q}{a}}{bqz}{q}{aqz},
\end{split}\end{equation}
with Wronskian relation
\begin{equation}\label{Wronskian}
    \UqK{\ee}{a}{b}{z}y_2(z/q)-\UqK{\ee}{a}{b}{z/q}y_2(z)=\K{-z}\qpr{abz}{q}{\infty}.
\end{equation}
If $\ee=E$ or $\ee=\theta$ then we use the same $\kappa$ on both sides according to \eqref{X},
and if $\ee=\lambda$ then we use the same $\kappa$ in \eqref{second} and on the right-hand side of
\eqref{Wronskian}.

Two independent solutions near $z=\infty$ are
\begin{equation}
\begin{split}
    y_3(z)&=z^{-\ln(a)/\ln(q)}\qphi{2}{1}{a,0}{\frac{aq}{b}}{q}{\frac{q}{abz}},\\
    y_4(z)&=z^{-\ln(b)/\ln(q)}\qphi{2}{1}{b,0}{\frac{bq}{a}}{q}{\frac{q}{abz}}.
\end{split}\end{equation}
In \cite{Morita2013}, the connection relations between the solutions of \eqref{2phi0DiffEq} are
discussed, focusing on the relation between $y_2(z)$, $y_3(z)$ and $y_4(z)$.
The relation between $\UqK{\lambda}{a}{b}{z}$, $y_3(z)$ and $y_4(z)$ is mentioned.
In Section \ref{S:connections}, we express $\UqK{E}{a}{b}{z}$ and $\UqK{\theta}{a}{b}{z}$
in terms of $y_3(z)$ and $y_4(z)$.

Hence, we have three $q$-Borel-Laplace resummations for $\qphi{2}{0}{a,b}{-}{q}{z}$. It follows from
Theorem \ref{thmCompareqL} that
\begin{equation}
\begin{split}
    \UqK{\theta}{a}{b}{z}-\UqK{E}{a}{b}{z}&=\frac{\qpr{a,b,abz}{q}{\infty}}{\qpr{q}{q}{\infty}}
    \qphi{2}{1}{\frac{q}{a},\frac{q}{b}}{0}{q}{abz}\frac{P_q^{(c)}(z)}{\Thetaq{-qz}},\\
    \UqK{\theta}{a}{b}{z}-\UqK{\lambda}{a}{b}{z}&=\frac{\qpr{a,b,abz}{q}{\infty}}{\qpr{q}{q}{\infty}}
    \qphi{2}{1}{\frac{q}{a},\frac{q}{b}}{0}{q}{abz}\frac{P_q^{(d)}(z;\lambda)}{\Thetaq{-qz}},
\end{split}
\end{equation}
that is, the difference is the second solution (see \eqref{second}) times a universal $q$-periodic function.


\section{Integral and Sum Representations}\label{S:integrals}
The Kummer $U$ is an important special function, with many well-known special cases. It has
several important and useful integral representations. In this section, we will derive similar representations
for $\UqK{\ee}{a}{b}{z}$. Again, using \eqref{X}, and 
$K_0=\frac{\qpr{a,b}{q}{\infty}}{\qpr{q}{q}{\infty}}$.
\begin{theorem}\label{thmIntegrals}
    \begin{align}
        \UqK{\ee}{a}{b}{z}&=\int_0^\infty \frac{\qpr{-bt}{q}{\infty}}{\qpr{-t}{q}{\infty}}
        \qphi{1}{1}{b}{-bt}{q}{-at}\K{\frac{t}{z}} \frac{\intd t}{t},\label{Uq2phi1}\\
        &=\qpr{a}{q}{\infty}\int_0^\infty\frac{\qpr{-bt,-\frac{aqz}{t}}{q}{\infty}}{\qpr{-t}{q}{\infty}}
        \K{\frac{t}{z}}\frac{\intd t}{t}, \quad |aq|<1,\label{UqPoch}\\
        &=\qpr{abz}{q}{\infty}\int_0^\infty
        \frac{\qpr{-at,-bt}{q}{\infty}}{\qpr{-t}{q}{\infty}}
        \K{\frac{t}{z}} \frac{\intd t}{t}, \quad |abz|<1,\label{UqSymmetric}
    \end{align}
    and
\begin{equation} \label{UqCauchyHeine}
    \UqK{\ee}{a}{b}{z}=K_0\int_0^\infty\frac{\qpr{-bqt}{q}{\infty}\qphi{1}{1}{\frac{q}{a}}{-bqt}{q}{-aqt}}{t+z}\K{qt}\intd t,
\end{equation}
when $|a|,|b|<1$.
\end{theorem}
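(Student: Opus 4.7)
The plan is to derive each integral representation by applying an appropriate transformation to the integrand of the defining formula \eqref{2phi0BorelLaplaceK}, $\UqK{k}{a}{b}{z}=\int_0^\infty \qphi{2}{1}{a,b}{0}{q}{-t}\K{t/z}\,\intd t/t$.

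For \eqref{Uq2phi1}, the key is the confluent Heine identity
$$\qphi{2}{1}{a,b}{0}{q}{z}=\frac{\qpr{bz}{q}{\infty}}{\qpr{z}{q}{\infty}}\qphi{1}{1}{b}{bz}{q}{az},$$
obtained as the $c\to 0$ limit of Heine's transformation $\qphi{2}{1}{a,b}{c}{q}{z}=\frac{\qpr{c/b,bz}{q}{\infty}}{\qpr{c,z}{q}{\infty}}\qphi{2}{1}{abz/c,b}{bz}{q}{c/b}$, using the asymptotics $\qpr{abz/c}{q}{n}(c/b)^n\to(-az)^n q^{\binom{n}{2}}$ as $c\to 0$. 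Substituting $z\mapsto -t$ and inserting into the definition gives \eqref{Uq2phi1} immediately.

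For \eqref{UqPoch}, I would start from \eqref{Uq2phi1} and apply the analogous confluent limit of Heine's third transformation to obtain $\qphi{1}{1}{b}{-bt}{q}{-at}=\qpr{a}{q}{\infty}\sum_n \qpr{-t}{q}{n} a^n/\qpr{-bt,q}{q}{n}$, valid for $|a|<1$. Interchanging sum and integral, each summand is a shifted $q$-Laplace transform of $g(s)=\qpr{-bs}{q}{\infty}/\qpr{-s}{q}{\infty}$; the substitution $s=tq^n$ together with the quasi-periodicity \eqref{Fprop} of $\K{\cdot}$ extracts the factor $q^{\binom{n+1}{2}}z^n s^{-n}$. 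Pulling the sum back inside the integral and recognising $\sum_n q^{\binom{n}{2}}(aqz/s)^n/\qpr{q}{q}{n}=\qpr{-aqz/s}{q}{\infty}$ via the $q$-binomial theorem then produces \eqref{UqPoch}.

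For \eqref{UqSymmetric}, expand $\qpr{-at}{q}{\infty}/\qpr{-t}{q}{\infty}$ as the convergent $q$-binomial series $\sum_n (a;q)_n(-t)^n/(q;q)_n$ and integrate term-by-term against $\qpr{-bt}{q}{\infty}\K{t/z}$, evaluating each integral via entries (1) and (5) of Table~\ref{tab:qLaplace} as $q^{-\binom{n}{2}}z^n/\qpr{bzq^{-n}}{q}{\infty}$. Inverting the shifted Pochhammer via the identity $\qpr{bzq^{-n}}{q}{\infty}=(-bz)^n q^{-\binom{n+1}{2}}\qpr{q/(bz)}{q}{n}\qpr{bz}{q}{\infty}$ collapses the sum into $\qphi{2}{1}{a,0}{q/(bz)}{q}{q/b}/\qpr{bz}{q}{\infty}$, after which a standard $_2\phi_1$ transformation (or direct comparison with the formal $_2\phi_0$ series defining $\UqK{k}{a}{b}{z}$) converts this to $\UqK{k}{a}{b}{z}/\qpr{abz}{q}{\infty}$, giving \eqref{UqSymmetric}.

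For \eqref{UqCauchyHeine}, applying the same confluent Heine identity to the $\qphi{2}{1}{q/a,q/b}{0}{q}{\cdot}$ appearing in the second solution \eqref{second} shows that the numerator $\qpr{-bqt}{q}{\infty}\qphi{1}{1}{q/a}{-bqt}{q}{-aqt}\K{qt}$ is exactly $y_2(-t)$, so that \eqref{UqCauchyHeine} takes the compact form $\UqK{k}{a}{b}{z}=K_0\int_0^\infty y_2(-t)/(t+z)\,\intd t$, the $q$-analog of the third representation in \eqref{KummerU0}. To produce the Cauchy kernel $1/(t+z)$ from the $q$-exponential kernel $\K{t/z}$, I would apply a Fubini-type interchange to a double integral generated from the partial-fraction expansion \eqref{PartFrac1} for $1/\Thetaq{\cdot}$ (proving the $k=\theta$ case first and then bridging to $k=E$ via $\Pq{\cdot}$). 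The main obstacle is this last step: converting a $q$-exponential kernel to a Cauchy kernel is delicate, and this is presumably the reason the proof is deferred to the Appendix. A secondary difficulty arises in \eqref{UqSymmetric}, where the interchange of sum and integral takes the $q$-binomial series beyond its radius of convergence $|t|<1$; this can be legitimised either by analytic continuation in $z$ from a region where the integral converges absolutely, or by showing both sides satisfy the $q$-difference equation \eqref{2phi0DiffEq} and agree in an asymptotic sense as $z\to 0$.
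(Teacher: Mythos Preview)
Your derivations of \eqref{Uq2phi1} and \eqref{UqPoch} are correct and coincide with the paper's: the confluent Heine transformation, then term-by-term expansion of the ${}_2\phi_1$ combined with the quasi-periodicity \eqref{Fprop} of $\kappa$.

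For \eqref{UqSymmetric} your route diverges from the paper and carries a real gap. Your termwise integration expands $\qpr{-at}{q}{\infty}/\qpr{-t}{q}{\infty}$ by the $q$-binomial series, which converges only for $|t|<1$, while the integral runs over $(0,\infty)$; neither of your proposed fixes closes this. In particular, ``both sides satisfy \eqref{2phi0DiffEq} and agree asymptotically as $z\to0$'' does \emph{not} force equality, since any $q$-periodic multiple of the second solution $y_2$ can be added without disturbing either property---this non-uniqueness is precisely the theme of the paper. The paper instead starts from \eqref{UqPoch}, chops the $t$-integral into $q$-intervals $\int_{q^{n+1}}^{q^n}$, which packages the resulting sum as a bilateral ${}_2\psi_2$, applies the $d\to0$ limit of the transformation \cite[17.10.1]{NIST:DLMF} to turn it into a ${}_1\psi_2$, and then reassembles the full integral. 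This stays rigorous throughout.

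For \eqref{UqCauchyHeine} you are missing the central idea. The paper does \emph{not} convert the $q$-exponential kernel into a Cauchy kernel by Fubini and partial fractions; it obtains the Cauchy kernel from Cauchy's integral formula itself. Concretely: from \eqref{UqSymmetric} one reads off the monodromy \eqref{connectionE}, $\UqK{k}{a}{b}{z\expe^{2\pi\iunit}}-\UqK{k}{a}{b}{z}=-2\pi\iunit K_0\,\K{qz\expe^{\pi\iunit}}\qpr{-bqz}{q}{\infty}\qphi{1}{1}{q/a}{-bqz}{q}{-aqz}$. One then writes $w_1(z)=\UqK{k}{a}{b}{z\expe^{-\pi\iunit}}$ as a Cauchy integral over a keyhole contour (Appendix~\ref{CauchyHeineIntRep}); the discontinuity across the cut contributes exactly the integrand of \eqref{UqCauchyHeine}, and the contribution from the large circle vanishes as $R\to\infty$ thanks to the large-$z$ behaviour supplied by the connection formula \eqref{connectionK} under $|a|,|b|<1$. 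Your partial-fraction approach, even if made to converge for $k=\theta$, would produce a sum of Cauchy kernels at $z q^{-n}$ rather than a single one at $z$, and the integrand would still be the Borel transform ${}_2\phi_1(a,b;0;q,-t)$ rather than the second solution $y_2(-t)$; getting from there to \eqref{UqCauchyHeine} would require essentially the same monodromy argument you have not supplied.
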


\begin{theorem}\label{thmSums}
    \begin{align}
    \UqK{\lambda}{a}{b}{z}&
    =\sum_{n=-\infty}^\infty \frac{\qpr{-bq^n\lambda}{q}{\infty}\qphi{1}{1}{b}{-bq^n\lambda}{q}{
    -aq^n\lambda}}{\qpr{-q^n\lambda}{q}{\infty}\Thetaq{\frac{q^n\lambda}{z}}}\label{Lsum2}\\
    &=\qpr{a}{q}{\infty}\sum_{n=-\infty}^\infty\frac{\qpr{-bq^n\lambda,-\frac{aqz}{q^n\lambda}}{q}{\infty}}{\qpr{-q^n\lambda}{q}{\infty}\Thetaq{\frac{q^n\lambda}{z}}}, \qquad |aq|<1,\label{Lsum3}\\
    &=\qpr{abz}{q}{\infty}\sum_{n=-\infty}^\infty
    \frac{\qpr{-aq^n\lambda,-bq^n\lambda}{q}{\infty}}{
    \qpr{-q^n\lambda}{q}{\infty}\Thetaq{\frac{q^n\lambda}{z}}}, \quad |abz|<1,\label{Lsum4}
\end{align}
and
\begin{equation}\label{Lsum5}
    \UqK{\lambda}{a}{b}{z}=K_0\sum_{n=-\infty}^\infty\frac{\qpr{-bq^{n+1}\lambda}{q}{\infty}\qphi{1}{1}{\frac{q}{a}}{-bq^{n+1}\lambda}{q}{-aq^{n+1}\lambda}}{\Thetaq{q^{n+1}\lambda}}\frac{q^n\lambda}{q^n\lambda+z}.
\end{equation}
\end{theorem}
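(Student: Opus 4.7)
The plan is to run, step for step, the proof of Theorem \ref{thmIntegrals} with the continuous $q$-Laplace integral $\int_0^\infty f(t)\K{t/z}\,\frac{\intd t}{t}$ replaced throughout by the discrete sum $\sum_{n=-\infty}^\infty f(q^n\lambda)/\Thetaq{q^n\lambda/z}$. This substitution is legitimate because property (4) of Table \ref{tab:qLaplace} is the very definition of $\LqK{\lambda}$ on monomials (extended to $n\in\mathbb{Z}$), and the derived properties (1), (5)--(10) then follow for $\LqK{\lambda}$ by the same manipulations used for $\LqK{E}$ and $\LqK{\theta}$; in particular every $\qphi{2}{1}$-level identity invoked in the proof of Theorem \ref{thmIntegrals} transfers verbatim to the $\LqK{\lambda}$ setting.

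Equation \eqref{Lsum2} follows by a pointwise substitution. The Heine-type reduction
\begin{equation*}
\qphi{2}{1}{a,b}{0}{q}{-t}=\frac{\qpr{-bt}{q}{\infty}}{\qpr{-t}{q}{\infty}}\qphi{1}{1}{b}{-bt}{q}{-at}
\end{equation*}
is an equation in $t$ alone -- it provides the analytic continuation of the left-hand side past $|t|=1$ and is precisely the identity underlying \eqref{Uq2phi1} -- so evaluating at $t=q^n\lambda$ in \eqref{2phi0BorelLaplaceL} gives \eqref{Lsum2} term by term.

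Equations \eqref{Lsum3} and \eqref{Lsum4} do not come from a bare pointwise rewrite, because the proposed summands acquire $z$-dependence through the factors $\qpr{-aqz/(q^n\lambda)}{q}{\infty}$ and $\qpr{abz}{q}{\infty}$ respectively. For these I would expand the $z$-dependent Pochhammer factor by Euler's $q$-exponential identity, interchange the $n$- and Euler-$m$ summations, and identify the inner $n$-sum as $q^{-\binom{-m}{2}}z^{-m}\UqK{\lambda}{0}{b}{zq^m}$ using property (1) of Table \ref{tab:qLaplace} for $\LqK{\lambda}$ together with the $q$-binomial identity $\qphi{2}{1}{0,b}{0}{q}{-t}=\qpr{-bt}{q}{\infty}/\qpr{-t}{q}{\infty}$. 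Resumming in $m$ then collapses the double series to $\UqK{\lambda}{a}{b}{z}$ in exactly the same way as on the continuous side, yielding \eqref{UqPoch} and \eqref{UqSymmetric}.

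Finally, \eqref{Lsum5} is the discrete Cauchy--Heine analogue of \eqref{UqCauchyHeine}. The integrand of \eqref{UqCauchyHeine} is a product of a $t$-function and the rational kernel $1/(t+z)$, and $\K{qt}=t\K{t}$ by \eqref{Fprop}; applying the natural discretisation $\int_0^\infty h(t)\,\intd t\leadsto -\ln(q)\sum_n h(q^n\lambda)\,q^n\lambda$ (the same discretisation that turns $\LqK{\theta}$ into $\LqK{\lambda}$) then converts \eqref{UqCauchyHeine} directly into \eqref{Lsum5}, with the overall factor $-\ln q$ cancelling the $-1/\ln q$ inside $\K{q^{n+1}\lambda}=-1/(\ln(q)\,\Thetaq{q^{n+1}\lambda})$.

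The main technical obstacle sits in the Fubini step used for \eqref{Lsum3} and \eqref{Lsum4}: Euler's expansion of $\qpr{-aqz/(q^n\lambda)}{q}{\infty}$ is a power series in the \emph{negative} power $1/(q^n\lambda)$, so the iterated double sum over $(n,m)$ is not automatically absolutely convergent. The hypotheses $|aq|<1$ and $|abz|<1$ are tailored precisely to close this gap and license the exchange of order of summation.
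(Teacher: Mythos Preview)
Your handling of \eqref{Lsum2} is fine and matches the paper. Your sketch for \eqref{Lsum3} is essentially the paper's computation run in reverse: the paper starts from \eqref{Lsum2}, applies the ${}_1\phi_1\to{}_2\phi_1$ transformation, expands, shifts the bilateral index $n\mapsto n-m$, and resums via Euler; your Euler expansion of $\qpr{-aqz/(q^n\lambda)}{q}{\infty}$ followed by property~(1) amounts to the same manipulation. For \eqref{Lsum4}, however, your recipe stalls: the $z$-dependent factor $\qpr{abz}{q}{\infty}$ sits \emph{outside} the $n$-sum, so Euler-expanding it produces no $(q^n\lambda)^{-m}$ to feed into property~(1), and the inner $n$-sum is then $m$-independent. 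The paper instead recognises the $n$-sum in \eqref{Lsum3} as a bilateral ${}_2\psi_2$ and applies the $d\to0$ limit of \cite[\href{http://dlmf.nist.gov/17.10.E1}{17.10.1}]{NIST:DLMF} to reach the ${}_1\psi_2$ form that unpacks as \eqref{Lsum4}.

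The real gap is your argument for \eqref{Lsum5}. The ``natural discretisation'' $\int_0^\infty h(t)\,\intd t\leadsto -\ln(q)\sum_n h(q^n\lambda)\,q^n\lambda$ is \emph{not} equality-preserving: the whole point of Theorem~\ref{thmCompareqL} is that $(\LqK{\theta}B)(z)$ and $(\LqK{\lambda}B)(z)$ differ by a nonzero (if $q$-exponentially small) amount. Discretising the $k=\theta$ Cauchy--Heine integral therefore yields an expression of the right shape, but gives you no licence to identify it with $\UqK{\lambda}{a}{b}{z}$. Nor can you rerun the Cauchy--Heine construction of Appendix~\ref{CauchyHeineIntRep} directly, since that relies on the multivaluedness and connection formula of $\UqK{k}{a}{b}{z}$ across $\arg z=\pi$, whereas $\UqK{\lambda}{a}{b}{z}$ is single-valued (it has poles, not branching). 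The paper's route is entirely different: it views \eqref{Lsum3} as a meromorphic function of $z$ with simple poles at $z=-q^k\lambda$, computes those residues explicitly (using \eqref{PartFrac1} and a further ${}_2\psi_2$ transformation), matches them against the manifest residues of the right-hand side of \eqref{Lsum5}, and concludes that \eqref{Lsum5} is the partial-fraction expansion of \eqref{Lsum3}.
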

Mellin--Barnes integral representations:
\begin{theorem}\label{thmMB}
    \begin{align}
    \UqK{E}{a}{b}{z}&
    =\frac{-K_0}{2\pi\iunit}\int_{-\iunit\infty}^{\iunit\infty}
    \frac{\pi\qpr{q^{1+s}}{q}{\infty}q^{-\binom{s}{2}}z^s}{\sin(\pi s)\qpr{aq^s,bq^s}{q}{\infty}}
    \intd s,\label{MB1}\\
    \UqK{\theta}{a}{b}{z}&
    =\frac{K_0}{2\pi\iunit\qpr{q}{q}{\infty}^3\ln q}\int_{-\iunit\infty}^{\iunit\infty}
    \frac{\pi^2\qpr{q^{1+s}}{q}{\infty}\Thetaq{-q^{s}}z^s}{\sin^2(\pi s)\qpr{aq^s,bq^s}{q}{\infty}}
    \intd s,\label{MB2}\\
    \UqK{\lambda}{a}{b}{z}&
    =\frac{-K_0}{2\pi\iunit}\int_{-\iunit\infty}^{\iunit\infty}
    \frac{\pi\qpr{q^{1+s}}{q}{\infty}\lambda^s\Thetaq{\frac{\lambda}{z}q^s}}{
    \sin(\pi s)\qpr{aq^s,bq^s}{q}{\infty}\Thetaq{\frac{\lambda}{z}}}
    \intd s,\label{MB3}
\end{align}
in which the contour of integration separates the poles at $s=0,1,2,\ldots$, from the poles at
$s+\frac{\ln a}{\ln q}=-m-n\frac{2\pi\iunit}{\ln q}$, $s+\frac{\ln b}{\ln q}=-m-n\frac{2\pi\iunit}{\ln q}$ with
$m\in\mathbb{N}_0$, and $n\in\mathbb{Z}$.
\end{theorem}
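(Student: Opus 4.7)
The plan is to insert the Mellin--Barnes representation \eqref{rphisMB_E} of the convergent ${}_2\phi_1$ under the integrals and sum in \eqref{2phi0BorelLaplaceK}--\eqref{2phi0BorelLaplaceL}, to interchange the order of the two integrations (resp.\ summation and integration), and to evaluate the remaining one-variable transform of the single power $t^s$. In \eqref{rphisMB_E} with upper index $r=2$, lower index $1$, and $b_1=0$, the exponent $1+s-r$ is zero, so the trailing theta factor disappears, and since $\qpr{0}{q}{\infty}=\qpr{0\cdot q^s}{q}{\infty}=1$ the infinite products involving the trivial $b_1$ collapse, giving
\begin{equation*}
    \qphi{2}{1}{a,b}{0}{q}{-t} = \frac{-K_0}{2\pi\iunit}\int_{\mathcal{C}}\frac{\pi\qpr{q^{1+s}}{q}{\infty}\,t^s}{\sin(\pi s)\,\qpr{aq^s,bq^s}{q}{\infty}}\intd s.
\end{equation*}

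For \eqref{MB1}, the inner integral produced by the swap is $\int_0^\infty t^{s-1}\Cq\Eq{t/z}\intd t = q^{-\binom{s}{2}}z^s$, which is the first identity in \eqref{BLpower} extended to complex $s$, and substitution immediately gives \eqref{MB1}. For \eqref{MB2}, the inner integral is $\int_0^\infty t^{s-1}/\Thetaq{t/z}\intd t$. In the strip $0<\Re s<1$, expanding $1/\Thetaq{t/z}$ via the partial-fraction formula \eqref{PartFrac1}, integrating term by term via $\int_0^\infty t^{s-1}/(t+a)\intd t=\pi a^{s-1}/\sin(\pi s)$, and identifying the resulting bilateral sum with a theta function by $\sum_n(-1)^n q^{\binom{n}{2}}q^{n(1-s)}=\Thetaq{-q^{1-s}}$ gives
\begin{equation*}
    \int_0^\infty \frac{t^{s-1}}{\Thetaq{t/z}}\intd t = \frac{\pi z^s\Thetaq{-q^{1-s}}}{\sin(\pi s)\qpr{q}{q}{\infty}^3},
\end{equation*}
and combining with the outer MB integral and the prefactor $-1/\ln q$ yields \eqref{MB2}. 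For \eqref{MB3}, the inner object is the bilateral sum $\sum_{n\in\mathbb{Z}}(q^n\lambda)^s/\Thetaq{q^n\lambda/z}$; the quasi-periodicity $1/\Thetaq{q^n\tau}=\tau^n q^{\binom{n}{2}}/\Thetaq{\tau}$, which is \eqref{Fprop} for $F=1/\Thetaq{\cdot}$, pulls $1/\Thetaq{\lambda/z}$ out of the sum and turns the remainder into $\sum_n q^{\binom{n}{2}}(q^s\lambda/z)^n=\Thetaq{q^s\lambda/z}$. Hence the sum equals $\lambda^s\Thetaq{q^s\lambda/z}/\Thetaq{\lambda/z}$, producing \eqref{MB3}.

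The main technical obstacle is the justification of the Fubini-type interchanges. Along the vertical contour $\mathcal{C}$ the factor $1/\sin(\pi s)$ decays exponentially in $|\Im s|$, while the $q$-Pochhammer quotient grows only mildly and stays bounded on compact subsets of $\mathcal{C}$; combined with standard estimates on the $t$-integral (or $n$-sum), this delivers absolute integrability of the double integral. A subsidiary point is that the term-by-term Mellin evaluation for $1/\Thetaq{t/z}$ needs the strip condition $0<\Re s<1$; the resulting identity is then extended to the full contour $\mathcal{C}$ by analytic continuation, the apparent poles at integer $s$ cancelling between $\sin(\pi s)$ and the zeros of $\Thetaq{-q^{1-s}}$ so as to reproduce the integer case already in \eqref{BLpower}.
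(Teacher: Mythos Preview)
Your proposal is correct and follows essentially the same approach as the paper: insert the Mellin--Barnes representation \eqref{phi21MB} of ${}_2\phi_1$ into the defining integrals/sum \eqref{2phi0BorelLaplaceK}--\eqref{2phi0BorelLaplaceL}, swap the order, and evaluate the inner $t$-integral (resp.\ $n$-sum) of $t^s$ exactly as you describe for each of the three cases. Your treatment is slightly more detailed on the Fubini justification and on the strip condition $0<\Re s<1$ with analytic continuation for the $\theta$-case, points the paper leaves implicit.
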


Integral/sum representations \eqref{Uq2phi1} and \eqref{Lsum2} are just alternative versions of the $q$-Borel-Laplace transforms. Representations \eqref{UqPoch} and \eqref{Lsum3} are in terms of more elementary functions.
Similarly for \eqref{UqSymmetric} and \eqref{Lsum4}, but note that these representations are also symmetric
in $a$ and $b$. Finally, the Cauchy-Heine representations \eqref{UqCauchyHeine} and \eqref{Lsum5} are very
convenient for remainder estimates in Section \ref{S:errorbounds}.

Integral representation \eqref{UqSymmetric} has the simplest integrand. When we consider its poles at
$t=\expe^{-\pi\iunit}q^{-n}$, $n=0,1,2,\ldots$, we obtain the connection formula:
\begin{equation}\label{connectionE}
\begin{split}
    \UqK{E}{a}{b}{z\expe^{2\pi\iunit}}&-\UqK{E}{a}{b}{z}\\
    &=-2\pi\iunit K_0\Cq\Eq{qz\expe^{\pi\iunit}}\qpr{abz}{q}{\infty}\qphi{2}{1}{\frac{q}{a},\frac{q}{b}}{0}{q}{abz}\\
    &=-2\pi\iunit K_0\Cq\Eq{qz\expe^{\pi\iunit}}\qpr{bqz}{q}{\infty}\qphi{1}{1}{\frac{q}{a}}{bqz}{q}{aqz}.
\end{split}
\end{equation}
For the $\K{\tau}=\frac{-1}{\ln(q)\Thetaq{\tau}}$ kernel, the connection formula is the same but with the $\Cq\Eq{qz\expe^{\pi\iunit}}$ replaced by $\frac{-1}{\ln(q)\Thetaq{-qz}}$.


\section{Connection Relations}\label{S:connections}

The case $\ee=\lambda$ of the following theorem is a special case of \cite[Theorem 3.1]{Adachi2019}.
\begin{theorem}\label{thmConnection}
For the choices $\ee=E,\theta,\lambda$ and $\frac{b}{a}\not\in \left\{q^n: n\in\mathbb{Z}\right\}$, we have
\begin{equation}\label{connectionK}
\begin{split}
    \UqK{\ee}{a}{b}{z}=&\pK{\ee}{a}{z}\left(az\right)^{-\frac{\ln a}{\ln q}}
    \frac{\qpr{b}{q}{\infty}}{\qpr{\frac{b}{a}}{q}{\infty}}
    \qphi{2}{1}{a,0}{\frac{aq}{b}}{q}{\frac{q}{abz}}\\
    &+\pK{\ee}{b}{z}\left(bz\right)^{-\frac{\ln b}{\ln q}}\frac{\qpr{a}{q}{\infty}}{\qpr{\frac{a}{b}}{q}{\infty}}
    \qphi{2}{1}{b,0}{\frac{bq}{a}}{q}{\frac{q}{abz}},
\end{split}
\end{equation}
in which the $\pK{\ee}{a}{z}$ are $q$-periodic in both variables. More explicitly,
\begin{equation}\label{pKlambda}
    \pK{\lambda}{a}{z}=\frac{\Pq{a\lambda}\Pq{\frac{\lambda}{az}}}{\Pq{\lambda}\Pq{\frac{\lambda}{z}}},
\end{equation}
\begin{equation}\label{pKE}
    \pK{E}{q^\alpha}{q^\zeta}=\frac{\sqrt{\frac{-2\pi^3}{\ln^3 q}}}{q^{\frac18}\qpr{q}{q}{\infty}^3}
    \theta_1\left(\pi\alpha,\qhat\right)
    \sum_{n=-\infty}^\infty(-1)^n\qhat^{n^2}\frac{
    \expe^{-2n\pi\iunit(\alpha+\zeta)}}{
    \sin(\pi\alpha+\iunit n\ln\hat{q})},
\end{equation}
for an alternative representation see \eqref{pKE2}, and
\begin{equation}\label{pKtheta1}
    \pK{\theta}{q^\alpha}{q^\zeta}=\frac{\sqrt{\frac{-2\pi^3}{\ln^3 q}}
    \theta_1\left(\pi\alpha,\qhat\right)\theta_1\left(\pi(\alpha+\zeta),\qhat\right)}{
    q^{\frac18}\qpr{q}{q}{\infty}^3\theta_1\left(\pi \zeta,\qhat\right)}\left(
    \frac{\theta'_1\left(\pi\alpha,\qhat\right)}{\theta_1\left(\pi \alpha,\qhat\right)}
    -\frac{\theta'_1\left(\pi(\alpha+\zeta),\qhat\right)}{\theta_1\left(\pi(\alpha+\zeta),\qhat\right)}
    \right).
\end{equation}
For alternative representations see \eqref{pKtheta2} and \eqref{pKtheta3}.
\end{theorem}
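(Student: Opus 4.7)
The plan is to prove each connection formula by deforming the contour of integration in the appropriate Mellin--Barnes representation from Theorem \ref{thmMB} and summing the residues collected in the left half-plane. As $\UqK{k}{a}{b}{z}$ satisfies \eqref{2phi0DiffEq} and the pair $y_3(z), y_4(z)$ spans the solution space near $z=\infty$, a decomposition of the form \eqref{connectionK} with coefficients $q$-periodic in $z$ is guaranteed \emph{a priori}; the $q$-periodicity in $a$ must be verified afterwards from the explicit formulas. The real task is therefore to identify the coefficients in closed form.

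In each of the integrands \eqref{MB1}--\eqref{MB3}, the poles outside the original contour split into two families: $s_{m,n}^{(a)} = -m - \frac{\ln a}{\ln q} - \frac{2n\pi\iunit}{\ln q}$ from $1/\qpr{aq^s}{q}{\infty}$, and the symmetric $s_{m,n}^{(b)}$ from $1/\qpr{bq^s}{q}{\infty}$. The hypothesis $b/a\notin q^{\mathbb{Z}}$ ensures these families are disjoint. After pushing $\Re s\to-\infty$, the rapid decay provided by $q^{-\binom{s}{2}}$, $\Thetaq{-q^{1-s}}$, or $\Thetaq{\lambda q^s/z}$ respectively kills the tail contribution, so each integral equals the sum of residues. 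For each fixed $n$, the inner sum $\sum_{m\geq 0}\operatorname{Res}_{s=s_{m,n}^{(a)}}$ reconstitutes the convergent series $\qphi{2}{1}{a,0}{aq/b}{q}{q/(abz)}$ multiplied by the prefactor $a^{-\ln(az)/\ln q}\qpr{b}{q}{\infty}/\qpr{b/a}{q}{\infty}$ appearing in \eqref{connectionK}, leaving a residual $n$-dependent factor whose bilateral sum over $\mathbb{Z}$ becomes the $q$-periodic coefficient $\pK{k}{a}{z}$. The $b$-family is handled identically by the $a \leftrightarrow b$ symmetry of the integrands.

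For $k=\lambda$, the $n$-sum is an elementary bilateral theta-type series which splits via \eqref{PartFrac2} into the four-factor ratio \eqref{pKlambda}, reproducing \cite[Theorem 3.1]{Adachi2019}. For $k=E$, the extra Gaussian $q^{-\binom{s}{2}}$ produces the $\qhat^{n^2}$ factors and the $1/\sin(\pi\alpha+\iunit n\ln\qhat)$ denominators of \eqref{pKE}; matching the stated form then amounts to applying \eqref{PartFrac2a} with appropriate parameters and extracting the overall $\theta_1$ factor through the half-period translation \eqref{PqMinus}. In both cases, the $q$-periodicity of $\pK{k}{a}{z}$ in $a$ and in $z$ follows from the standard quasi-periodicity of $\theta_1$ and $\Pq{\cdot}$, and the periodicity in $z$ is cross-checked by the $q$-difference equation argument of the first paragraph.

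The hardest case is $k=\theta$. The integrand in \eqref{MB2} carries the extra factor $\Thetaq{-q^{1-s}}/\sin(\pi s)$, which is regular and nonvanishing at each $a$-family pole but varies at logarithmic rate in $s$. The residue at $s_{m,n}^{(a)}$ therefore acquires an additional logarithmic-derivative contribution coming from the $s$-expansion of $\Thetaq{-q^{1-s}}/\sin(\pi s)$ at that point; this is precisely the mechanism producing the difference $\theta_1'(\pi\alpha,\qhat)/\theta_1(\pi\alpha,\qhat) - \theta_1'(\pi(\alpha+\zeta),\qhat)/\theta_1(\pi(\alpha+\zeta),\qhat)$ of \eqref{pKtheta1}. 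Recognising the bilateral $n$-sum of these derivative contributions as that $\theta_1'/\theta_1$ difference is the main technical hurdle; it will be carried out using the partial-fraction identities \eqref{lemmaparfrac1}--\eqref{lemmaparfrac2}, with \eqref{lemmaparfrac3} providing an independent consistency check on the resulting closed form.
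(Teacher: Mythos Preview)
Your overall approach matches the paper's: in all three cases one pushes the Mellin--Barnes contour to the left, collects the simple-pole residues from the two $q$-Pochhammer factors, and for fixed $n$ the $m$-sum rebuilds the ${}_2\phi_1$ factors of \eqref{connectionK}, leaving a bilateral $n$-sum that is $\pK{k}{a}{z}$. For $k=\lambda$ and $k=E$ your description is essentially correct (the paper in fact uses \eqref{PartFrac2a} rather than \eqref{PartFrac2} to close the $\lambda$-sum, but this is a minor variant).

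Your account of the $k=\theta$ case, however, misidentifies the mechanism. The poles at $s_{m,n}^{(a)}$ in \eqref{MB2} are \emph{simple}: $1/\qpr{aq^s}{q}{\infty}$ vanishes to first order there, and the remaining factor $\Thetaq{-q^{1-s}}/\sin^2(\pi s)$ is regular and nonvanishing (its zeros and poles all sit at integer $s$). The residue therefore picks up only the \emph{value} of that factor, not any derivative, so no ``logarithmic-derivative contribution from the $s$-expansion'' arises at this stage. What one actually obtains is the raw bilateral sum \eqref{pKtheta2}, with $1/\sin^2$ in each term. The $\theta_1'/\theta_1$ difference of \eqref{pKtheta1} appears only afterwards, by evaluating that $1/\sin^2$ sum through the separate identity \eqref{PartFrac2b} (itself proved by a second Mellin--Barnes-type residue argument applied to $\int z^s(1-aq^s)^{-1}\sin^{-2}(\pi s)\,\intd s$), combining with \eqref{PartFrac2} and its $a$-derivative, and finally rewriting the resulting $\theta_q$ ratios via \eqref{PqMinus}. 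The identities \eqref{lemmaparfrac1}--\eqref{lemmaparfrac2} you propose are not the right tool here: they handle sums over $q^n$ on the real $q$-spiral, whereas what is needed is the identification of $\sum_n z^{-2\pi\iunit n/\ln q}/\sin^2\bigl(\frac{\pi}{\ln q}(2n\pi\iunit+\ln a)\bigr)$.
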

We note that for most reasonable choices of the parameters, we have for the $q$-periodic multipliers that
$\pK{\ee}{a}{z}\approx 1$. Note also that $\pK{\ee}{q^n}{z}=1$, $n\in\mathbb{Z}$, and
$\UqK{\ee}{q^{-n}}{b}{z}=z^nq^{-n^2}\qpr{b}{q}{n}\qphi{2}{1}{q^{-n},0}{q^{1-n}/b}{q}{\frac{q^{1+n}}{bz}}$,
$n\in\mathbb{N}_0$. Compare \cite[(1.13.16)]{KLS2010}.

Taking the $b\to a q^m$ limit in \eqref{connectionK}, we obtain the following result.
\begin{corollary}
    When $b=aq^m$, $m=1,2,3,\ldots$, we have for all three choices $\ee=E,\theta,\lambda$ that
    \begin{equation}\begin{split}\label{connectionKaa}
    \left(az\right)^{-\frac{\ln a}{\ln q}}
    &\frac{\qpr{q}{q}{\infty}}{\qpr{a}{q}{\infty}}\UqK{\ee}{a}{b}{z}\\
    =&-\pK{\ee}{a}{z}\frac{\qpr{q}{q}{m}}{q^m\qpr{a}{q}{m}}
    \sum_{n=0}^{m-1}\frac{\qpr{a}{q}{n}}{\qpr{q^{-m}}{q}{n+1}\qpr{q}{q}{n}}
    \left(\frac{q}{abz}\right)^n\\
    &+\left(\pK{\ee}{a}{z}\left(1-\frac{\ln\left(abz\right)}{\ln q}\right)+a\frac{\partial \pK{\ee}{a}{z}}{\partial a}
    \right)\frac{q^{\binom{m}{2}}\qphi{2}{1}{b,0}{q^{m+1}}{q}{\frac{q}{abz}}}{\qpr{q}{q}{m}\left(-abz/q\right)^{m}}\\
    &+\pK{\ee}{a}{z}\left(-1\right)^m q^{\binom{m}{2}}
    \sum_{n=0}^\infty\frac{\qpr{b}{q}{n}}{\qpr{q}{q}{m+n}\qpr{q}{q}{n}}\left(\frac{q}{abz}\right)^{m+n}\\
    &\qquad\qquad\qquad\qquad\qquad
    \times\left(\Psiq{bq^{n}}-\Psiq{q^{m+n+1}}-\Psiq{q^{n+1}}
    \right),
    \end{split}\end{equation}
    with
    \begin{equation}
    \Psiq{a}=\sum_{\ell=0}^\infty\frac{aq^\ell}{1-aq^\ell}.
\end{equation}
\end{corollary}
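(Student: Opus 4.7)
The approach is to treat \eqref{connectionK} as an identity holomorphic in $b$ and pass to the limit $b \to aq^m$ on the right-hand side, exploiting the fact that $\UqK{k}{a}{b}{z}$ is analytic at $b = aq^m$ so that the apparent singularities must cancel. I would set $b = aq^{m+\eps}$ and expand in $\eps$ through $O(1)$. The singularities are transparent: the coefficient $\qpr{a}{q}{\infty}/\qpr{a/b}{q}{\infty}$ of the second term of \eqref{connectionK} has a simple pole at $\eps = 0$, since $\qpr{q^{-m-\eps}}{q}{\infty}$ carries the vanishing factor $1 - q^{-\eps} = -\eps \ln q + O(\eps^2)$. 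At the same time, the series $\qphi{2}{1}{a,0}{q^{1-m-\eps}}{q}{\frac{q}{abz}}$ in the first term of \eqref{connectionK} contains the identical singular factor in every summand with $n \geq m$, coming from $\qpr{q^{1-m-\eps}}{q}{n}$.

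I would split the first $\qphi{2}{1}$ as $\sum_{n=0}^{m-1} + \sum_{n=m}^{\infty}$. The truncated sum is analytic at $\eps = 0$ and, after multiplication by the prefactors and the overall rescaling $a^{\ln(az)/\ln q}\qpr{q}{q}{\infty}/\qpr{a}{q}{\infty}$, yields the finite polynomial in $q/(abz)$ which is the first sum on the right of \eqref{connectionKaa}. For the tail I would reindex $n = m + n'$ and extract the pole via the factorisation $\qpr{q^{1-m-\eps}}{q}{m+n'} = \qpr{q^{1-m-\eps}}{q}{m-1}(1-q^{-\eps})\qpr{q^{1-\eps}}{q}{n'}$, using $\qpr{q^{1-m}}{q}{m-1} = (-1)^{m-1}q^{-\binom{m}{2}}\qpr{q}{q}{m-1}$ to convert at $\eps=0$; the leading terms then match (up to sign and a power of $q$) the summands of the $\qphi{2}{1}{b,0}{q^{m+1+\eps}}{q}{\frac{q}{abz}}$ from the second term, so the two $1/\eps$ poles cancel. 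A direct verification of this cancellation serves as a sanity check before extracting the finite part. The $O(1)$ remainder then assembles from three independent sources: (i) the expansion $b^{-\ln(bz)/\ln q} = (aq^m)^{-\ln(aq^m z)/\ln q}\bigl(1 - \eps(\ln(abz) + \ln q) + O(\eps^2)\bigr)$, contributing the scalar factor $1 - \ln(abz)/\ln q$; (ii) the expansion $\pK{k}{b}{z} = \pK{k}{a}{z} + \eps\ln q\cdot aq^m\,\partial_{a'}\pK{k}{a'}{z}\big|_{a' = aq^m} + O(\eps^2)$, combined with the $q$-periodicity $\pK{k}{qa}{z} = \pK{k}{a}{z}$ (which also gives $aq^m\,\partial_{a'}\pK{k}{aq^m}{z} = a\,\partial\pK{k}{a}{z}/\partial a$), producing the $a\,\partial_a\pK{k}{a}{z}$ term; (iii) logarithmic derivatives of the remaining $q$-Pochhammer symbols $\qpr{b}{q}{\infty}$, $\qpr{bq^n}{q}{\infty}$, $\qpr{q}{q}{m+n}$ and $\qpr{q}{q}{n}$, via the identity $\frac{d}{d\eps}\log\qpr{cq^{\ell}}{q}{\infty}\big|_{\eps=0} = -\ln q\cdot\Psiq{cq^{\ell}}$ for $c$ depending linearly on $\eps$, producing exactly the combination $\Psiq{bq^n} - \Psiq{q^{m+n+1}} - \Psiq{q^{n+1}}$ of the third sum in \eqref{connectionKaa}. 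That the argument is uniform in $k = E, \theta, \lambda$ is automatic, since only the generic $q$-periodicity of $\pK{k}{a}{z}$ in its first variable is used; the explicit forms \eqref{pKlambda}--\eqref{pKtheta1} never enter.

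The principal obstacle is the combinatorial bookkeeping of the tail. One must verify that multiplying out the prefactors of the second term of \eqref{connectionK} against the $(1-q^{-\eps})$-extracted reindexed summand from the first term leaves exactly $\qpr{q}{q}{m+n}\qpr{q}{q}{n}$ in the denominators, with the correct overall sign $(-1)^m$ and power $q^{\binom{m}{2}}$ and with the Pochhammers $\qpr{a}{q}{n}$, $\qpr{q^{-m}}{q}{n+1}$, $\qpr{b}{q}{n}$ lining up precisely as written in the finite and infinite sums. Tracking the three $\Psi_q$-contributions through the shift $n \mapsto m + n'$ with the displayed signs is where the bulk of the calculation lies; by contrast, the $O(1/\eps)$ cancellation is forced by holomorphy of $\UqK{k}{a}{b}{z}$ and can be treated as a consistency check rather than a computation to be executed in full.
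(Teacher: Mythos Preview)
Your approach is correct and is exactly the one the paper uses: the paper's proof is the single sentence ``Taking the $b\to a q^m$ limit in \eqref{connectionK}, we obtain the following result.'' You have supplied the mechanism that the paper leaves entirely to the reader---the splitting $\sum_{n<m}+\sum_{n\ge m}$, the extraction of the common factor $(1-q^{-\eps})^{-1}$ from both singular pieces, and the identification of the three $O(1)$ contributions (power prefactor, $\partial_a p_k$, and the $\Psi_q$ terms from the $q$-Pochhammer logarithmic derivatives)---so there is nothing to contrast methodologically.
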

In the case $m=0$, that is $b=a$, the first term on the right-hand side of \eqref{connectionKaa} vanishes.


\section{Error Bounds}\label{S:errorbounds}
The three functions $\UqK{\ee}{a}{b}{z}$ are resummations of $\qphi{2}{0}{a,b}{-}{q}{z}$. We still have
to show that this divergent series is the asymptotic expansion of $\UqK{\ee}{a}{b}{z}$ as $z\to0$.
We denote \eqref{UqCauchyHeine} as
\begin{equation} \label{Bound0}
    \UqK{\ee}{a}{b}{z}=K_0\int_0^\infty\frac{u(t)}{t+z}\K{qt}\intd t,
\end{equation}
with $K_0=\frac{\qpr{a,b}{q}{\infty}}{\qpr{q}{q}{\infty}}$ and
\begin{equation}\label{ut}
    \begin{split}
        u(t)&=\qpr{-abt}{q}{\infty}\qphi{2}{1}{\frac{q}{a},\frac{q}{b}}{0}{q}{-abt}
        =\qpr{-aqt}{q}{\infty}\qphi{1}{1}{\frac{q}{b}}{-aqt}{q}{-bqt}\\
        &=\frac{\qpr{\frac{q}{a},-aqt,\frac{-1}{at}}{q}{\infty}}{
        \qpr{\frac{b}{a}}{q}{\infty}}\qphi{2}{1}{a,0}{\frac{aq}{b}}{q}{\frac{-q}{abt}}
         +\frac{\qpr{\frac{q}{b},-bqt,\frac{-1}{bt}}{q}{\infty}}{
        \qpr{\frac{a}{b}}{q}{\infty}}\qphi{2}{1}{b,0}{\frac{bq}{a}}{q}{\frac{-q}{abt}}.
    \end{split}
\end{equation}
The first line in \eqref{ut} shows that $u(t)$ is bounded for finite $t$, and the final
representation shows that for large $t$, the growth of $u(t)$ is dominated by the factors
$\qpr{-aqt}{q}{\infty}$ and $\qpr{-bqt}{q}{\infty}$. Let $c=\max(|a|,|b|)$ and for $M_q(a,b)$, a positive
constant such that
\begin{equation}\label{Mqab}
    |u(t)|\leq M_q(a,b) \qpr{-cq|t|}{q}{\infty},\quad {\rm for~all~}t{\rm ~in~the~half~plane}\quad \Re t\geq0.
\end{equation}

\begin{theorem}\label{thmErrorbounds}
We take $c=\max(|a|,|b|)$ and for $M_q(a,b)$, a positive
constant such that \eqref{Mqab} holds.
Take $N$ a positive integer with $N>1-\frac{\ln c}{\ln q}$ and define the remainder via
\begin{equation} \label{Bound1}
    \UqK{\ee}{a}{b}{z}=\sum_{n=0}^{N-1}\frac{\qpr{a,b}{q}{n}}{\qpr{q}{q}{n}}q^{-\binom{n}{2}}
    \left(-z\right)^n+R_{N,\ee}(a,b;z).
\end{equation}
Then in the half-plane $\Re z\geq 0$, we have
\begin{equation} \label{Bound2}
    \left|R_{N,\ee}(a,b;z)\right|\leq\left|z^N\right| \left|K_0\right|
    \frac{M_q(a,b)q^{-\binom{N}{2}}}{\qpr{cq^N}{q}{\infty}},
\end{equation}
for all three cases $\ee=E,\theta,\lambda$. In the case $\ee=\lambda$, this bound holds for 
$\lambda> 0$, and we do not require $N>1-\frac{\ln c}{\ln q}$.

In the sectors $\left|\arg z\right|\in(\frac\pi2,\pi)$, we have
\begin{align} 
    \left|R_{N,E}(a,b;z)\right|&\leq\left|z^N\right| \left|K_0\right|
    \frac{M_q(a,b)\exp\left({\frac{-\zeta^2}{2\ln q}}\right)q^{-\binom{N}{2}}}{\qpr{cq^N}{q}{\infty}},
    \label{Bound3}\\
    \left|R_{N,\theta}(a,b;z)\right|&\leq\left|z^N\right| \left|K_0\right|
    \frac{M_q(a,b)\exp\left({\frac{-\zeta^2}{2\ln q}}\right)q^{-\binom{N}{2}}}{\qpr{cq^N}{q}{\infty}\qpr{\qhat^2}{\qhat^2}{\infty}^3}\left(\frac{1+\sqrt{\qhat}}{1-\sqrt{\qhat}}\right),
    \label{Bound4}
\end{align}
with $\zeta=\left|\arg z\right|-\frac\pi2$ and
\begin{equation}
    \left|R_{N,\lambda}(a,b;z)\right|\leq\left|z^N\right| \left|K_0\right|
    \frac{M_q(a,b)q^{-\binom{N}{2}}}{\qpr{cq^N}{q}{\infty}|\sin\xi|},
    \label{Bound5b}
\end{equation}
where $\xi=\arg z$.
\end{theorem}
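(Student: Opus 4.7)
The plan is to begin from the Cauchy--Heine representation \eqref{Bound0} and apply the purely algebraic identity
$$\frac{1}{t+z} = \sum_{n=0}^{N-1}\frac{(-z)^n}{t^{n+1}} + \frac{(-z)^N}{t^N(t+z)}$$
inside the integral. The super-algebraic decay of $\K{qt}$ at $t=0$, together with the condition $N>1-\ln c/\ln q$ which guarantees that $u(t)\K{qt}/t^{N+1}$ is integrable at infinity under \eqref{Mqab}, makes each individual integral in this split absolutely convergent. The first $N$ contributions must reproduce the partial sum $\sum_{n=0}^{N-1}\frac{\qpr{a,b}{q}{n}}{\qpr{q}{q}{n}}q^{-\binom{n}{2}}(-z)^n$; I would justify this identification by expanding $u(t)$ as a power series at $t=0$, integrating term by term using $\int_0^\infty t^{m-1}\K{qt}\intd t = q^{-\binom{m+1}{2}}$, and resumming via a $q$-binomial identity, or equivalently by comparison with the alternative representation \eqref{2phi0BorelLaplaceK} in which $\qphi{2}{1}{a,b}{0}{q}{-t}$ plays the role of the natural $q$-Borel transform.

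The remainder then equals
$$R_{N,k}(a,b;z) = K_0(-z)^N \int_0^\infty \frac{u(t)\K{qt}}{t^N(t+z)}\intd t.$$
In the half-plane $\Re z\geq 0$, I would use $|t+z|\geq t$ on the positive real axis, substitute \eqref{Mqab}, and expand $\qpr{-cqt}{q}{\infty}=\sum_n q^{\binom{n}{2}}(cqt)^n/\qpr{q}{q}{n}$. Term-by-term integration followed by the reverse $q$-binomial theorem gives
$$\int_0^\infty \frac{\qpr{-cqt}{q}{\infty}\,\K{qt}}{t^{N+1}}\intd t = q^{-\binom{N}{2}}\sum_{n=0}^\infty \frac{(cq^N)^n}{\qpr{q}{q}{n}}=\frac{q^{-\binom{N}{2}}}{\qpr{cq^N}{q}{\infty}},$$
establishing \eqref{Bound2} for $k=E,\theta$. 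For $k=\lambda$ the integral is replaced throughout by the sum \eqref{Lsum5}, and the inequality $|q^n\lambda+z|\geq q^n\lambda$ (valid for $\lambda>0$ and $\Re z\geq 0$) with the discrete analogue of the $q$-binomial resummation gives the same bound without the constraint on $N$, since the discrete sum converges for every positive $N$.

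For the extended sectors $|\arg z|\in(\pi/2,\pi)$ in the $E$ and $\theta$ cases, I would rotate the contour of integration to the ray $[0,\infty e^{\iunit\zeta})$ with $\zeta=|\arg z|-\pi/2$, so that $te^{\iunit\zeta}\perp z$ and $|te^{\iunit\zeta}+z|\geq t$ still holds. The rotation introduces the universal factor
$$|\Eq{qte^{\iunit\zeta}}| = \Eq{qt}\,\exp\!\left(\frac{-\zeta^2}{2\ln q}\right),$$
obtained from $\Re\ln^2(qte^{\iunit\zeta})=\ln^2(qt)-\zeta^2$, which produces \eqref{Bound3}. For $k=\theta$ the same rotation is combined with the Fourier series \eqref{PqReciprocal} for $1/\Pq{\cdot}$; bounding the coefficients $|\widetilde a_n|\leq 1$ and summing the rotated cosine series gives the $\qhat$-dependent constant $(1+\sqrt{\qhat})/(1-\sqrt{\qhat})$ in \eqref{Bound4}. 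For $k=\lambda$ in the same sectors, the elementary inequality $|q^n\lambda+z|\geq q^n\lambda|\sin\xi|$ with $\xi=\arg z$, which follows from $(q^n\lambda+|z|\cos\xi)^2+|z|^2\sin^2\xi\geq (q^n\lambda\sin\xi)^2$, directly yields \eqref{Bound5b}.

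The main obstacle will be the rotated-contour analysis for $k=\theta$: as $\zeta\to\pi/2$ the rotated ray approaches the poles of $1/\Thetaq{qt}$ on the negative real axis, so uniformly controlling the growth of $|1/\Pq{\cdot}|$ along the rotated ray, with explicit $\qhat$-dependent constants, requires the full Fourier-series machinery of Section \ref{S:SpecialFunct}.
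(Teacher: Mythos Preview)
Your proposal is correct and follows essentially the same route as the paper: the same geometric-series splitting of $1/(t+z)$ applied to the Cauchy--Heine representation \eqref{Bound0} (and its discrete analogue \eqref{Lsum5}), the same $|t+z|\geq t$ bound on the positive axis, the same contour rotation by $\zeta=|\arg z|-\tfrac\pi2$ for $k=E,\theta$ with the Fourier series \eqref{PqReciprocal} controlling $1/|\Pq{\cdot}|$, and the same $|r+z|\geq r|\sin\xi|$ inequality for $k=\lambda$. The only cosmetic differences are that you compute $\int_0^\infty t^{-N-1}\qpr{-cqt}{q}{\infty}\K{qt}\,\intd t$ by explicit $q$-binomial expansion where the paper cites entries (1) and (5) of Table~\ref{tab:qLaplace}, and you obtain the $1/|\sin\xi|$ bound by completing the square whereas the paper extremises $|r/(r+z)|^2$ over $|z|$; both give the same constants.
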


In the half-plane $\Re z\geq 0$, we have the relatively sharp error bound \eqref{Bound2},
and the bound is the same for the three cases! In the case $\ee=E$, the Stokes phenomenon will take place
at $\arg z=\pm\pi$, that is, the $q$-exponentially small term that appears on the right-hand side of
\eqref{connectionE} will be switched on when we cross $\arg z=\pm\pi$. As a consequence, in the sectors 
$\left|\arg z\right|\in(\frac\pi2,\pi)$, the error bound
increases, in this case, by a factor $\exp\left({\frac{-\zeta^2}{2\ln q}}\right)$ in \eqref{Bound3}. 
This is very similar for the error bounds for the Kummer $U$ function: see 
\cite[\href{https://dlmf.nist.gov/13.7.ii}{\S 13.7.ii}]{NIST:DLMF}. This increase occurs similarly in \eqref{Bound4} for the case $\ee=\theta$
except that the increase is larger. Finally, for the case $\ee=\lambda$, the small $z$-asymptotics has to break down
near $\arg z=\pm\pi$ because $\UqK{\lambda}{a}{b}{z}$ has poles along the negative real $z$ axis. Note the factor
$\frac1{\sin\xi}$ in \eqref{Bound5b}.


\section{Other Properties of $\Uq{a}{b}{z}$}\label{S:other}

All choices of $\Uq{a}{b}{z}=\UqK{\ee}{a}{b}{z}$ satisfy the following $q$-difference equations:
\begin{equation}
\begin{split}
    (1-b)\Uq{a}{qb}{z}+b\Uq{a}{b}{qz}-\Uq{a}{b}{z}&=0,\\
    b(1-a)\Uq{qa}{b}{z}-a(1-b)\Uq{a}{qb}{z}+(a-b)\Uq{a}{b}{z}&=0,\\
    (1-abqz)(1-aq)\Uq{q^2a}{b}{z}-(1+(1-a)q-(b-aq)qaz)\Uq{qa}{b}{z}&\\
    +q\Uq{a}{b}{z}&=0.
\end{split}
\end{equation}
The $q$-difference equation
\begin{equation}
     \Uq{a}{b}{z}-\Uq{a}{bq}{z/q}=(1-a)z q^{-1}\Uq{aq}{bq}{z/q^2},
\end{equation}
seems to give us the continued fraction
\begin{equation}\label{CF}
    \frac{\Uq{a}{b}{z}}{\Uq{a}{bq}{z/q}}=_{?}1+\frac{\alpha_1z}{1+}\frac{\alpha_2z}{1+}\cdots,\ \text{with}
    \  \alpha_{2n+1}=\frac{1-aq^n}{q^{2n+1}},\ \alpha_{2n}=\frac{1-bq^n}{q^{2n}}.
\end{equation}
If we take $c_n=1+\frac{\alpha_1z}{1+}\frac{\alpha_2z}{1+}\cdots\frac{\alpha_{n-1}z}{1+\alpha_n z}$
then it seems that both $c_1,c_3,c_5,\ldots$, and $c_2,c_4,c_6,\ldots$, converge, but not to the same
limit and neither limit equals the left-hand side of \eqref{CF} for any of
our three choices for $\UqK{\ee}{a}{b}{z}$.

Both sides of \eqref{CF} have the same asymptotic expansion as $z\to0$.
In the terminating case, that is, one of $a$, $b$ is of the form $q^{-n}$, equation \eqref{CF} holds.


\appendix

\section{Cauchy-Heine Integral Representations} \label{CauchyHeineIntRep}

Let $w_1(z)$ be a multivalued function with $w_1(z)=\bigO(1)$ as $z\to0$ in the sector $0\leq\arg z\leq2\pi$,
and having a simple connection relation $w_1(z\expe^{2\pi\iunit})-w_1(z)=2\pi\iunit Kw_2(z)$.
In applications, the constant $K$ is the Stokes multiplier and $w_2(z)$ is $q$-exponentially small as $z\to 0+$.
We take $0<\rho<|z|<R$, $\arg z\in(0,2\pi)$ and ${\mathcal C}$ the keyhole contour displayed in Figure \ref{CHcontour}. Then
\begin{equation} \label{CauchyHeine2Term}
\begin{split}
    w_1(z)&=\frac{1}{2\pi \iunit}\oint_{\mathcal{C}}\frac{w_1(t)}{t-z}\intd t
    =\frac{1}{2\pi\iunit}\left(\int_\rho^R+\int_{|t|=R}+\int_{R\expe^{2\pi\iunit}}^{\rho\expe^{2\pi\iunit}}
    +\int_{|t|=\rho}\right)\frac{w_1(t)}{t-z}\intd t\\
    &=\frac{1}{2\pi\iunit}\int_0^R \frac{w_1(t)-w_1(t\expe^{2\pi \iunit})}{t-z}\intd t
    +\frac{1}{2\pi\iunit}\int_{|t|=R}\frac{w_1(t)}{t-z}\intd t\\
   &=K\int_0^R\frac{w_2(t)}{z-t}\intd t
   +\frac{1}{2\pi\iunit}\int_{|t|=R}\frac{w_1(t)}{t-z}\intd t,
\end{split}
\end{equation}
where we took the limit $\rho\to 0$.
\begin{figure}[H]
\centering\includegraphics[width=0.5
\textwidth]{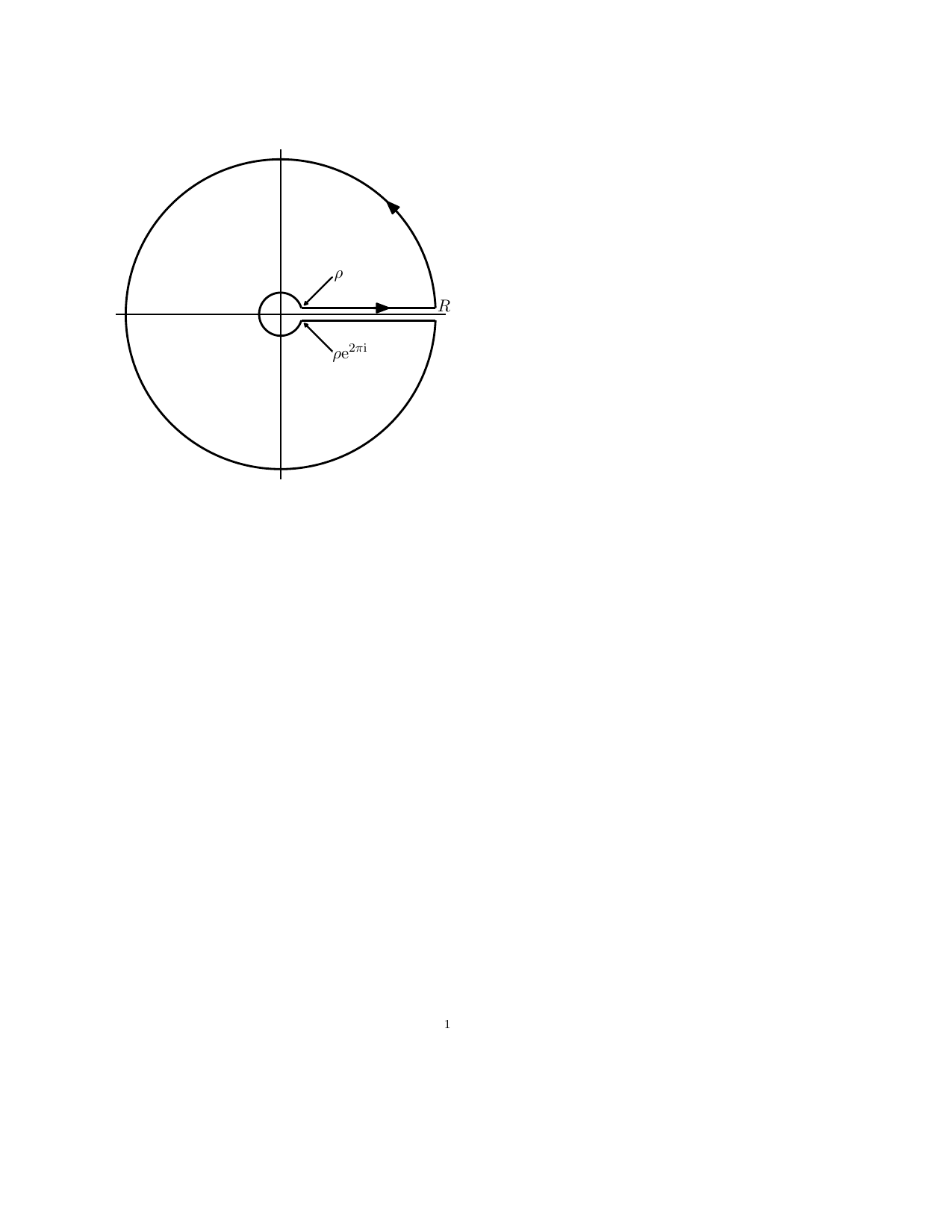}
\caption{Contour of integration $\mathcal{C}$ used to obtain the Cauchy-Heine integral representation.}
\label{CHcontour}
\end{figure}

\section{Proofs} \label{AS:Proofs}

\begin{proof}[Proof for Theorem \ref{thmCompareqL}]
First we show that the main results of Theorem \ref{thmCompareqL} hold for the case $S(-t)=1$. 
We use \eqref{PartFrac1} twice and partial fractions,
\begin{align}
    &(\LqK{\theta} B)(z)-(\LqK{\lambda} B)(z)=\sum_{n=-\infty}^\infty
    \left(\frac{-1}{\ln q}\int_{q^{n+1}}^{q^n} \frac{\intd t}{\Thetaq{t}(t+z)}-
    \frac{q^n\lambda}{\Thetaq{q^n\lambda}(q^n\lambda+z)}\right)\\
    &\qquad =\sum_{n=-\infty}^\infty
    \left(\frac{-1}{\ln q}\int_{q}^{1} \frac{q^n\intd t}{\Thetaq{q^nt}(q^nt+z)}-
    \frac{q^n\lambda}{\Thetaq{q^n\lambda}(q^n\lambda+z)}\right)\notag\\
    &\qquad =\sum_{n=-\infty}^\infty
    \left(\int_{0}^{1} \frac{q^{n+\tau}\intd \tau}{\Thetaq{q^{n+\tau}}(q^{n+\tau}+z)}-
    \frac{q^n\lambda}{\Thetaq{q^n\lambda}(q^n\lambda+z)}\right)\notag\\
    &\qquad =\sum_{n=-\infty}^\infty
    \sum_{m=-\infty}^\infty 
    \frac{\left(-1\right)^mq^{\binom{m}{2}}}{\qpr{q}{q}{\infty}^3}
    \left(\int_{0}^{1} \frac{q^\tau\intd \tau}{(q^\tau+q^{-m-n})(q^{n+\tau}+z)}-
    \frac{\lambda}{(\lambda+q^{-m-n})(q^n\lambda+z)}\right)\notag\\
    &\qquad =\sum_{n=-\infty}^\infty
    \sum_{m=-\infty}^\infty 
    \frac{\left(-1\right)^mq^{\binom{m}{2}}}{\qpr{q}{q}{\infty}^3(z-q^{-m})}
    \left(\int_{0}^{1} \frac{q^\tau\intd \tau}{q^\tau+q^{-m-n}}-
    \frac{\lambda}{\lambda+q^{-m-n}}\right)\notag\\
    &\qquad\qquad-\sum_{n=-\infty}^\infty
    \sum_{m=-\infty}^\infty 
    \frac{\left(-1\right)^mq^{\binom{m}{2}}}{\qpr{q}{q}{\infty}^3(z-q^{-m})}
    \left(\int_{0}^{1} \frac{q^\tau\intd \tau}{q^\tau+zq^{-n}}-
    \frac{\lambda}{\lambda+zq^{-n}}\right)\notag\\
    &\qquad =\sum_{n=-\infty}^\infty
    \sum_{m=-\infty}^\infty 
    \frac{\left(-1\right)^mq^{\binom{m}{2}}}{\qpr{q}{q}{\infty}^3(z-q^{-m})}
    \left(\int_{0}^{1} \frac{q^\tau\intd \tau}{q^\tau+q^{-n}}-
    \frac{\lambda}{\lambda+q^{-n}}\right)\notag\\
    &\qquad\qquad-\sum_{n=-\infty}^\infty
    \sum_{m=-\infty}^\infty 
    \frac{\left(-1\right)^mq^{\binom{m}{2}}}{\qpr{q}{q}{\infty}^3(z-q^{-m})}
    \left(\int_{0}^{1} \frac{q^\tau\intd \tau}{q^\tau+zq^{-n}}-
    \frac{\lambda}{\lambda+zq^{-n}}\right)\notag\\
    &\qquad =\frac1{\Thetaq{-z}}\sum_{n=-\infty}^\infty
    \left(\int_{0}^{1} \frac{q^\tau\intd \tau}{q^\tau+zq^{-n}}-\int_{0}^{1} \frac{q^\tau\intd \tau}{q^\tau+q^{-n}}
    +\frac{\lambda}{\lambda+q^{-n}}
    -\frac{\lambda}{\lambda+zq^{-n}}\right).\notag
\end{align}
Thus,
\begin{equation}\label{Pqdlong}
\begin{split}
P_q^{(d)}(z;\lambda)&=\sum_{n=-\infty}^\infty
    \left(\frac1{\ln q}\ln\left(\frac{q+zq^{-n}}{1+zq^{-n}}\right)
    -\frac1{\ln q}\ln\left(\frac{q+q^{-n}}{1+q^{-n}}\right)\right.\\
    &\qquad\qquad\qquad\qquad\qquad\qquad\left.+\frac{\lambda}{\lambda+q^{-n}}
    -\frac{\lambda}{\lambda+zq^{-n}}\right).
\end{split}\end{equation}
The sum of the first two terms gives a telescoping sum, and for the final two terms, we use 
\eqref{lemmaparfrac1}. In this way, we obtain the first representation of \eqref{Pqd1}. 
The second representation in terms of $\Pq{\tau}$ functions follows from using the relation
\begin{equation}\label{logdPq}
    \frac{t\dPq{t}}{\Pq{t}}=\frac{t\dThetaq{t}}{\Thetaq{t}}+\frac{\ln t}{\ln q}-\frac{1}{2},
\end{equation}
which is obtained by logarithmic differentiation of \eqref{qperiodic} and \eqref{Eq}.

When we apply the operator $f(\lambda)\mapsto \frac{-1}{\ln q}\int_{q}^1 \Pq{\frac{\lambda}{z}}
f(\lambda)\frac{\intd \lambda}{\lambda}$ to the left-hand side of \eqref{Compare2}, we obtain,
via \eqref{IntlamdaE}, the left-hand side of \eqref{Compare1}. Equation \eqref{Pqc0} follows.

We use the second representation of \eqref{Pqd1} in \eqref{Pqc0} and the fact that
$\Pq{q^t}$ is an even 1-periodic function in
\begin{equation}\label{IntPq1}
    \begin{split}
    P_q^{(c)}(z)&=\int_0^1\Pq{\frac{q^t}z}\frac{q^t\dPq{q^t}}{\Pq{q^t}}\intd t
    =-\int_0^1\Pq{zq^{-t}}\frac{q^{-t}\dPq{q^{-t}}}{\Pq{q^{-t}}}\intd t\\
    &=-\int_0^1\Pq{q^\tau}\frac{q^\tau\dPq{\frac{q^\tau}z}}{z\Pq{\frac{q^\tau}z}}\intd \tau
    =\frac{1}{z\ln q}\int_q^1\frac{\Pq{t}\dPq{\frac{t}{z}}}{\Pq{\frac{t}{z}}}\intd t,
    \end{split}
\end{equation}
which gives the second representation in \eqref{PqcPq}. Using \eqref{PqInt}, \eqref{logdPq} and \eqref{IntPq1}, we may obtain the first representation via
\begin{equation}
\begin{split}
    P_q^{(c)}(z)&=\frac{1}{z\ln q}\int_q^1\frac{\Pq{t}\dThetaq{\frac{t}{z}}}{\Thetaq{\frac{t}{z}}}\intd t-\frac{\ln z}{\ln^2q}\int_q^1\Pq{t}\frac{\intd t}{t}\\
    &\qquad\qquad\qquad+\frac{1}{\ln q}\int_q^1\Pq{t}\left(\frac{\ln t}{\ln q}-\frac{1}{2}\right)\frac{\intd t}{t}\\
    &=\frac{1}{z\ln q}\int_q^1\frac{\Pq{t}\dThetaq{\frac{t}{z}}}{\Thetaq{\frac{t}{z}}}\intd t+\frac{\ln z}{\ln q}-\int_{-\frac{1}{2}}^{\frac{1}{2}}\Pq{q^{\tau+\frac{1}{2}}}\tau\intd\tau.
\end{split}
\end{equation}
The final term in the above expression evaluates to zero as the integrand is odd.

To obtain series representation \eqref{Pqc1}, we present \eqref{IntPq1} as
\begin{equation}
    P_q^{(c)}(z)=\int_0^1q^t\dPq{q^t}\ln\left(\Pq{\frac{q^t}{z}}\right)\intd t,
\end{equation}
and use expansions \eqref{qperiodic} and \eqref{PqFourier}. In the resulting double series,
the integrals can be evaluated resulting in \eqref{Pqc1}.

The first equation of \eqref{Pqd2} is a direct consequence of \eqref{Pqd1}. The second equation of 
\eqref{Pqd2} is \eqref{lemmaparfrac3}.
Limits \eqref{Compare1a} and \eqref{Compare2a} follow from \eqref{PartFrac1}, \eqref{Pqc1} and 
\eqref{Pqdlong}.

Finally, for general $S(-t)$, we first make the observations that for $n=0,1,2,\ldots$, we have
\begin{equation}\begin{split}\label{uniform0}
    \frac{-1}{\ln q}\int_0^\infty \frac{(1-\Pq{t})t^n}{\Thetaq{t}}\intd t
    &=\frac{-1}{\ln q}\int_0^\infty \frac{t^n}{\Thetaq{t}}\intd t-
    \Cq\int_0^\infty\Eq{t}t^n\intd t\\
    &=q^{-\binom{n+1}{2}}-q^{-\binom{n+1}{2}}=0.
\end{split}\end{equation}
In the identity,
\begin{equation}\label{uniform1}
    (\LqK{\theta} B)(z)-(\LqK{E} B)(z)=\frac{S(z)}{\Thetaq{-z}}P_q^{(c)}(z)-
    \frac{1}{\ln q}\int_0^\infty \frac{(1-\Pq{t})(S(-t)-S(z))}{\Thetaq{t}(t+z)}\intd t,
\end{equation}
we will show that the final integral is zero.
The function $f(t)=\frac{S(-t)-S(z)}{t+z}$ is an entire function of order and type zero with the same growth
condition as $S(t)$. It follows that in the final integral of \eqref{uniform1}, we can expand $f(t)$ in its
Maclaurin series and we are allowed (Fubini's theorem) to interchange integration and summation. 
Because of \eqref{uniform0}, the resulting series is zero and \eqref{Compare1} holds.
The derivation of the general case of \eqref{Compare2} is very similar.
\end{proof}

\begin{proof}[Proof for Corollary \ref{crllPqcdStokes}]
From \eqref{Pqc1},
\begin{equation}
    P_q^{(c)}(z\expe^{2\pi\iunit})=-\frac{2\pi}{\ln q}\sum_{n=1}^\infty (-1)^n\qhat^{n^2}\frac{\sin\left(2\pi n\frac{\ln z}{\ln q}+2\iunit n\ln\qhat\right)}{\sinh(n\ln\qhat)}.
\end{equation}
Using trigonometric identities for complex argument, we have
\begin{equation}
\begin{split}
    \sin\left(2\pi n\frac{\ln z}{\ln q}+2\iunit n\ln\qhat\right)&=\sin\left(2\pi n\frac{\ln z}{\ln q}\right)\cosh(2n\ln\qhat)+\iunit\cos\left(2\pi n\frac{\ln z}{\ln q}\right)\sinh(2n\ln\qhat)\\
    &=\sin\left(2\pi n\frac{\ln z}{\ln q}\right)+2\sin\left(2\pi n\frac{\ln z}{\ln q}\right)\sinh^2(n\ln\qhat)\\
    &\qquad+2\iunit\cos\left(2\pi n\frac{\ln z}{\ln q}\right)\cosh(n\ln\qhat)\sinh(n\ln\qhat)\\
    &=\sin\left(2\pi n\frac{\ln z}{\ln q}\right)+2\iunit\sinh(n\ln\qhat)\cos\left(2\pi n\frac{\ln(z\expe^{\pi\iunit})}{\ln q}\right).
\end{split}
\end{equation}
Therefore,
\begin{equation}
\begin{split}
    P_q^{(c)}(z\expe^{2\pi\iunit})&=P_q^{(c)}(z)-\frac{4\pi\iunit}{\ln q}\sum_{n=1}^\infty(-1)^n\qhat^{n^2}\cos\left(2\pi n\frac{\ln(z\expe^{\pi\iunit})}{\ln q}\right),\\
    &=P_q^{(c)}(z)-\frac{2\pi\iunit}{\ln q}(\Pq{z\expe^{\pi\iunit}}-1).
\end{split}
\end{equation}
Connection formula \eqref{PqdStokes} is easily obtained from \eqref{Pqd1}.
\end{proof}

\begin{proof}[Proof for Theorem \ref{thmIntegrals}]
Using analytic continuation \cite[(1.13.13)]{KLS2010}, we obtain integral representation 
\eqref{Uq2phi1} from \eqref{2phi0BorelLaplaceK}.

Using \cite[(1.13.7)]{KLS2010}, we obtain \eqref{UqPoch} from \eqref{Uq2phi1} via
\begin{equation}
\begin{split}
    \UqK{\ee}{a}{b}{z}&=\qpr{a}{q}{\infty}\int_0^\infty
    \frac{\qpr{-bt}{q}{\infty}}{
    \qpr{-t}{q}{\infty}}\qphi{2}{1}{-t,0}{-bt}{q}{a}
    \K{\frac{t}{z}} \frac{\intd t}{t}\\
    &=\qpr{a}{q}{\infty}\int_0^\infty\sum_{n=0}^\infty
   \frac{\qpr{-bq^nt}{q}{\infty}}{
    \qpr{-q^nt}{q}{\infty}}\frac{a^n}{\qpr{q}{q}{n}}
    \K{\frac{t}{z}} \frac{\intd t}{t}\\
    &=\qpr{a}{q}{\infty}\int_0^\infty\sum_{n=0}^\infty
   \frac{\qpr{-bt}{q}{\infty}}{
    \qpr{-t}{q}{\infty}}\frac{a^n}{\qpr{q}{q}{n}}
    \K{\frac{t}{z}q^{-n}} \frac{\intd t}{t}\\
    &=\qpr{a}{q}{\infty}\int_0^\infty
    \frac{\qpr{-bt}{q}{\infty}}{
    \qpr{-t}{q}{\infty}}\qpr{-\frac{aqz}{t}}{q}{\infty}
    \K{\frac{t}{z}} \frac{\intd t}{t}.
\end{split}
\end{equation}

For \eqref{UqSymmetric}, we start with \eqref{UqPoch} and use
$\int_0^\infty=\sum_{n=-\infty}^\infty\int_{q^{n+1}}^{q^n}$:
\begin{equation}
\begin{split}
    \UqK{\ee}{a}{b}{z}&=\qpr{a}{q}{\infty}\int_q^1
    \frac{\qpr{-bt,-\frac{aqz}{t}}{q}{\infty}}{
    \qpr{-t}{q}{\infty}}
    \qpsi{2}{2}{-t,-\frac{t}{az}}{-bt,0}{q}{a}
    \K{\frac{t}{z}} \frac{\intd t}{t}\\
    &=\qpr{abz}{q}{\infty}\int_q^1
    \frac{\qpr{-at,-bt}{q}{\infty}}{
    \qpr{-t}{q}{\infty}}
    \qpsi{1}{2}{-t}{-at,-bt}{q}{-\frac{t}{z}}
    \K{\frac{t}{z}} \frac{\intd t}{t}\\
    &=\qpr{abz}{q}{\infty}\int_0^\infty
    \frac{\qpr{-at,-bt}{q}{\infty}}{
    \qpr{-t}{q}{\infty}}
    \K{\frac{t}{z}} \frac{\intd t}{t}.
\end{split}
\end{equation}
where we have used the limiting case $d\to0$ of
\cite[\href{http://dlmf.nist.gov/17.10.E1}{17.10.1}]{NIST:DLMF} in the second step.

Now we have three different integral representations for $\UqK{\KK}{a}{b}{z}$ and these can be used
to obtain connection formula \eqref{connectionE}. Hence, we have all the ingredients for the
construction of the Cauchy-Heine representation \eqref{UqCauchyHeine}: 
We consider the Cauchy-Heine representation in 
Appendix \ref{CauchyHeineIntRep} with $w_1(z)=\UqK{\KK}{a}{b}{z\expe^{-\pi\iunit}}$,
$w_2(z)=\K{qz}\qpr{-bqz}{q}{\infty}\qphi{1}{1}{\frac{q}{a}}{-bqz}{q}{-aqz}$, and
(use \eqref{connectionE}) connection formula $w_1(z\expe^{2\pi\iunit})-w_1(z)=-2\pi\iunit K_0 w_2(z)$.
It follows from connection formula \eqref{connectionK} with the constraint that $|a|,|b|<1$ that in the final line of
\eqref{CauchyHeine2Term}, we can let $R\to\infty$ and the contribution from infinity vanishes.
Thus,
\begin{equation}
    \UqK{\ee}{a}{b}{z\expe^{-\pi\iunit}}=-K_0\int_0^\infty\frac{\qpr{-bqt}{q}{\infty}\qphi{1}{1}{\frac{q}{a}}{-bqt}{q}{-aqt}}{z-t}\K{qt}\intd t,
\end{equation}
and therefore,
\begin{equation} \label{CauchyHeineK0}
    \UqK{\ee}{a}{b}{z}=K_0\int_0^\infty\frac{\qpr{-bqt}{q}{\infty}\qphi{1}{1}{\frac{q}{a}}{-bqt}{q}{-aqt}}{t+z}\K{qt}\intd t,
\end{equation}
with the constraint that $|a|,|b|<1$.
\end{proof}

\begin{proof}[Proof for Theorem \ref{thmSums}]
Sum representations \eqref{2phi0BorelLaplaceL} and \eqref{Lsum2} are the same sum, but with
analytic continuation \cite[(1.13.13)]{KLS2010} for the terms. Representation \eqref{Lsum3} follows from
\eqref{Lsum2} and transformation \cite[(1.13.7)]{KLS2010} via
\begin{equation}
    \begin{split}
        &\frac1{\qpr{a}{q}{\infty}}\sum_{n=-\infty}^\infty 
        \frac{\qpr{-bq^n\lambda}{q}{\infty}\qphi{1}{1}{b}{-bq^n\lambda}{q}{%
    -aq^n\lambda}}{\qpr{-q^n\lambda}{q}{\infty}\Thetaq{\frac{q^n\lambda}{z}}}=
    \sum_{n=-\infty}^\infty \frac{\qpr{-bq^n\lambda}{q}{\infty}\qphi{2}{1}{-q^n\lambda,0}{-bq^n\lambda}{q}{a}}{%
    \qpr{-q^n\lambda}{q}{\infty}\Thetaq{\frac{q^n\lambda}{z}}}\\
    &=\sum_{n=-\infty}^\infty \frac{\qpr{-bq^n\lambda}{q}{\infty}}{\qpr{-q^n\lambda}{q}{\infty} 
    \Thetaq{\frac{q^n\lambda}{z}}}\sum_{m=0}^\infty 
    \frac{\qpr{-q^n\lambda}{q}{m}a^m}{\qpr{-b q^n\lambda}{q}{m}\qpr{q}{q}{m}}\\
    &=\sum_{n=-\infty}^\infty\sum_{m=0}^\infty 
    \frac{\qpr{-bq^{n+m}\lambda}{q}{\infty}a^m}{\qpr{-q^{n+m}\lambda}{q}{\infty}\qpr{q}{q}{m} 
    \Thetaq{\frac{q^n\lambda}{z}}} 
    =\sum_{n=-\infty}^\infty\sum_{m=0}^\infty 
    \frac{\qpr{-bq^{n}\lambda}{q}{\infty}a^m}{\qpr{-q^{n}\lambda}{q}{\infty}\qpr{q}{q}{m} 
    \Thetaq{\frac{q^{n-m}\lambda}{z}}}\\
    &=\sum_{n=-\infty}^\infty\sum_{m=0}^\infty 
    \frac{\qpr{-bq^{n}\lambda}{q}{\infty}q^{\binom{m}{2}}\left(\frac{aqz}{q^n\lambda}\right)^m}{\qpr{-q^{n}\lambda}{q}{\infty}\qpr{q}{q}{m} 
    \Thetaq{\frac{q^{n}\lambda}{z}}}
    =\sum_{n=-\infty}^\infty\frac{\qpr{-bq^n\lambda}{q}{\infty}\qpr{-\frac{aqz}{q^n\lambda}}{q}{\infty}}{%
    \qpr{-q^n\lambda}{q}{\infty}\Thetaq{\frac{q^n\lambda}{z}}}.
    \end{split}
\end{equation}
To obtain \eqref{Lsum4}, we represent \eqref{Lsum3} as
\begin{equation}
    \begin{split}
        \eqref{Lsum3}&=\frac{\qpr{a,-b\lambda,-\frac{aqz}{\lambda}}{q}{\infty}}{%
        \qpr{-\lambda}{q}{\infty}\Thetaq{\frac{\lambda}{z}}}\sum_{n=-\infty}^\infty
        \frac{\qpr{-\lambda}{q}{n}\qpr{\frac{-\lambda}{az}}{q}{n}}{\qpr{-b\lambda}{q}{n}}\\
        &=\frac{\qpr{a,-b\lambda,-\frac{aqz}{\lambda}}{q}{\infty}}{%
        \qpr{-\lambda}{q}{\infty}\Thetaq{\frac{\lambda}{z}}}
        \qpsi{2}{2}{-\lambda,\frac{-\lambda}{az}}{-b\lambda,0}{q}{a}\\
        &=\frac{\qpr{abz,-a\lambda,-b\lambda}{q}{\infty}}{%
        \qpr{-\lambda}{q}{\infty}\Thetaq{\frac{\lambda}{z}}}
        \qpsi{1}{2}{-\lambda}{-a\lambda,-b\lambda}{q}{-\frac\lambda{z}}\\
        &=\frac{\qpr{abz,-a\lambda,-b\lambda}{q}{\infty}}{%
        \qpr{-\lambda}{q}{\infty}\Thetaq{\frac{\lambda}{z}}}
        \sum_{n=-\infty}^\infty
        \frac{\qpr{-\lambda}{q}{n}q^{\binom{n}{2}}\left(\frac\lambda{z}\right)^n}{%
        \qpr{-a\lambda,-b\lambda}{q}{n}}=\eqref{Lsum4},
    \end{split}
\end{equation}
where we have used the limiting case $d\to0$ of
\cite[\href{http://dlmf.nist.gov/17.10.E1}{17.10.1}]{NIST:DLMF} in the second step.

We present \eqref{Lsum3} as
\begin{equation}\label{Lsum3a}
    \UqK{\lambda}{a}{b}{z}
    =\frac{\qpr{a}{q}{\infty}}{\Thetaq{\frac{\lambda}{z}}}\sum_{n=-\infty}^\infty
    \frac{\qpr{-bq^n\lambda,-\frac{aqz}{q^n\lambda}}{q}{\infty}q^{\binom{n}{2}}
    \left(\frac{\lambda}{z}\right)^n}{\qpr{-q^n\lambda}{q}{\infty}}.
\end{equation}
It follows from \eqref{PartFrac1} that the right-hand side of \eqref{Lsum3a} has a pole at $z=-\lambda q^k$
with residue
\begin{equation}\label{ResidueL2}
    \begin{split}
        &\frac{\lambda\qpr{a}{q}{\infty}}{\qpr{q}{q}{\infty}^3}\sum_{n=-\infty}^\infty
        \frac{\qpr{-bq^n\lambda,a q^{k-n+1}}{q}{\infty}\left(-1\right)^{n+k+1}q^{\binom{n}{2}+\binom{k}{2}+(1-n)k}}{\qpr{-q^n\lambda}{q}{\infty}}\\
        &\qquad=\frac{\lambda q^k\qpr{a}{q}{\infty}}{\qpr{q}{q}{\infty}^3}\sum_{n=-\infty}^\infty
        \frac{\qpr{-bq^{n+k+1}\lambda,a q^{-n}}{q}{\infty}\left(-1\right)^{n}q^{\binom{n+1}{2}}}{\qpr{-q^{n+k+1}\lambda}{q}{\infty}}\\
         &\qquad=\frac{\lambda q^k\qpr{a}{q}{\infty}^2\qpr{-bq^{k+1}\lambda}{q}{\infty}}{\qpr{q}{q}{\infty}^3\qpr{-q^{k+1}\lambda}{q}{\infty}}\sum_{n=-\infty}^\infty
        \frac{\qpr{\frac{q}{a},-q^{k+1}\lambda}{q}{n}a^n}{\qpr{-b q^{k+1}\lambda}{q}{n}}\\
        &\qquad=\frac{\lambda q^k\qpr{a}{q}{\infty}^2\qpr{-bq^{k+1}\lambda}{q}{\infty}}{\qpr{q}{q}{\infty}^3\qpr{-q^{k+1}\lambda}{q}{\infty}}\qpsi{2}{2}{\frac{q}{a},-\lambda q^{k+1}}{-b\lambda q^{k+1},0}{q}{a}\\
        &\qquad=\frac{\lambda q^k\qpr{a,b,-bq^{k+1}\lambda}{q}{\infty}}{\qpr{q}{q}{\infty}^2
        \qpr{-q^{k+1}\lambda,-\frac{q^{-k}}\lambda}{q}{\infty}}
        \sum_{n=0}^\infty \frac{\qpr{\frac{q}{a}}{q}{n}q^{\binom{n}{2}}}{\qpr{-b q^{k+1}\lambda,q}{q}{n}}\left(a\lambda q^{k+1}\right)^n,
    \end{split}
\end{equation}
where we have used the limiting case $d\to0$ of
\cite[\href{http://dlmf.nist.gov/17.10.E1}{17.10.1}]{NIST:DLMF} in the final step.
Note that the final line in \eqref{ResidueL2} can be presented as
\begin{equation}\label{ResidueL1}
    \frac{\qpr{a,b}{q}{\infty}}{\qpr{q}{q}{\infty}}\frac{\qpr{-bq^{k+1}\lambda}{q}{\infty}\qphi{1}{1}{\frac{q}{a}}{-bq^{k+1}\lambda}{q}{-aq^{k+1}\lambda}}{\Thetaq{q^{k+1}\lambda}}q^k\lambda.
\end{equation}
Hence, \eqref{Lsum5} is just the partial fraction representation of \eqref{Lsum3}.
\end{proof}

\begin{proof}[Proof for Theorem \ref{thmMB}]
This theorem is a direct consequence of the Mellin--Barnes integral representation
\begin{equation}\label{phi21MB}
    \qphi{2}{1}{a,b}{0}{q}{-t}
    =\frac{-K_0}{2\pi\iunit}\int_{-\iunit\infty}^{\iunit\infty}
    \frac{\pi\qpr{q^{1+s}}{q}{\infty}t^s}{\sin(\pi s)\qpr{aq^s,bq^s}{q}{\infty}}
    \intd s,
\end{equation}
see \cite[\href{http://dlmf.nist.gov/17.6.E29}{17.6.29}]{NIST:DLMF}. 
We use \eqref{phi21MB} in \eqref{2phi0BorelLaplaceK} for the case $\ee=E$.
In the resulting double integral, the $t$-integral is just 
$\Cq \int_0^\infty t^{s-1}\Eq{\frac{t}{z}}\intd t=q^{-\binom{s}{2}}z^s$, compare \eqref{BLpower}. Hence,
we obtain Mellin--Barnes integral representation \eqref{MB1}.

The case $\ee=\theta$ is similar, but we have to evaluate
\begin{equation}\label{BLpowerGeneral}
    \begin{split}
        \frac{-1}{\ln q}\int_0^\infty \frac{t^{s-1}}{\Thetaq{\frac{t}{z}}}\intd t&=
        \frac{-z^s}{\ln q}\int_0^\infty \frac{t^{s-1}}{\Thetaq{t}}\intd t
        =\frac{-z^s}{\ln q}\sum_{n=-\infty}^\infty\frac{\left(-1\right)^n q^{\binom{n}{2}}}{\qpr{q}{q}{\infty}^3}
        \int_0^\infty \frac{t^{s-1}}{t+q^{-n}}\intd t\\
        &=\frac{-z^s}{\qpr{q}{q}{\infty}^3\ln q}\sum_{n=-\infty}^\infty\left(-1\right)^n q^{\binom{n}{2}-n(s-1)}
        \frac{\pi}{\sin(\pi s)}\\
        &=\frac{-z^s}{\qpr{q}{q}{\infty}^3\ln q}\frac{\pi}{\sin(\pi s)}\Thetaq{-q^{s}},
    \end{split}
\end{equation}
where we have used \eqref{PartFrac1}, \cite[\href{http://dlmf.nist.gov/5.12.E3}{5.12.3}]{NIST:DLMF} and \eqref{Thetaq}.
Hence, \eqref{MB2} follows. Note that \eqref{BLpowerGeneral} also follows from \cite[Eq.(1.7)]{Ismail:1995:SBB} by 
letting $a,b\to+\infty$.

Finally, we use \eqref{phi21MB} in \eqref{2phi0BorelLaplaceL} and obtain
\begin{equation}
    \UqK{\lambda}{a}{b}{z}
    =\frac{-K_0}{2\pi\iunit}\int_{-\iunit\infty}^{\iunit\infty}
    \frac{\pi\qpr{q^{1+s}}{q}{\infty}}{%
    \sin(\pi s)\qpr{aq^s,bq^s}{q}{\infty}}
    \sum_{n=-\infty}^\infty 
    \frac{q^{ns}\lambda^s}{\Thetaq{\frac{q^n\lambda}{z}}}
    \intd s.
\end{equation}
The sum can be evaluated,
\begin{equation}\label{BLpowerLambda}
    \sum_{n=-\infty}^\infty 
    \frac{q^{ns}\lambda^s}{\Thetaq{\frac{q^n\lambda}{z}}}=\frac{\lambda^s}{\Thetaq{\frac{\lambda}{z}}}
    \sum_{n=-\infty}^\infty q^{\binom{n}{2}+ns}\left(\frac\lambda{z}\right)^n=
    \frac{\lambda^s\Thetaq{\frac{\lambda}{z}q^s}}{\Thetaq{\frac{\lambda}{z}}},
\end{equation}
and \eqref{MB3} follows.
\end{proof}

\begin{proof}[Proof for Theorem \ref{thmConnection}]
Connection relation \eqref{connectionK} for the case $\ee=\lambda$ with multipliers \eqref{pKlambda} 
follows from \cite{Zhang2002}. We will need the first property of \eqref{Fprop} and \eqref{Eqab}.
For an alternative proof, we use Mellin--Barnes integral representation \eqref{MB3}.
The residue contribution of the pole at $s=-\frac{\ln a}{\ln q}-m-\frac{2n\pi\iunit}{\ln q}$ is
\begin{equation}
    \frac{-\pi\Eq{a}\Thetaq{-a}\Pq{\frac{\lambda}{az}}}{\qpr{q}{q}{\infty}^3 q^{\frac18}\ln (q)\Pq{\frac{\lambda}{z}}}
    \frac{\qpr{b}{q}{\infty}}{\qpr{\frac{b}{a}}{q}{\infty}}\left(az\right)^{-\frac{\ln a}{\ln q}}
    \frac{\qpr{a}{q}{m}\left(\frac{q}{abz}\right)^m}{\qpr{\frac{aq}{b}}{q}{m}\qpr{q}{q}{m}}
    \frac{\lambda^{-\frac{2\pi\iunit n}{\ln q}}}{\sin\left(\frac\pi{\ln q}(2n\pi\iunit+\ln a)\right)}.
\end{equation}
We push the contour of integration in \eqref{MB3} to the left and obtain \eqref{connectionK} for the case
$\ee=\lambda$ with 
\begin{equation}
    \pK{\lambda}{a}{z}=
    \frac{-\pi\Eq{a}\Thetaq{-a}\Pq{\frac{\lambda}{az}}}{\qpr{q}{q}{\infty}^3 q^{\frac18}\ln (q)\Pq{\frac{\lambda}{z}}}
    \sum_{n=-\infty}^\infty
    \frac{\lambda^{-\frac{2\pi\iunit n}{\ln q}}}{\sin\left(\frac\pi{\ln q}(2n\pi\iunit+\ln a)\right)}.
\end{equation}
Via \eqref{PartFrac2a}, we can identify the infinite sum and obtain \eqref{pKlambda}.

For the case $\ee=E$, we consider the residue contribution of Mellin--Barnes integral representation \eqref{MB1}.
The details are very similar and we obtain \eqref{connectionK} for the case $\ee=E$ with 
\begin{equation}\label{pKE2}
    \pK{E}{a}{z}=
    \frac{-\pi\Eq{a}\Thetaq{-a}}{\qpr{q}{q}{\infty}^3 q^{\frac18}\ln q}
    \sum_{n=-\infty}^\infty
    \frac{\left(-1\right)^n\qhat^{n^2}\left(az\right)^{-\frac{2\pi\iunit n}{\ln q}}}{\sin\left(\frac\pi{\ln q}(2n\pi\iunit+\ln a)\right)}.
\end{equation}
This can also be presented as \eqref{pKE}.

Finally, for the case $\ee=\theta$, we obtain via a similar route,
\begin{equation}\label{pKtheta2}
    \pK{\theta}{a}{z}=
    \frac{\pi^2\theta_q^2(-a)a^{\frac{\ln a}{\ln q}}}{a\qpr{q}{q}{\infty}^6 \ln^2q}
    \sum_{n=-\infty}^\infty
    \frac{z^{-\frac{2\pi\iunit n}{\ln q}}}{\sin^2\left(\frac\pi{\ln q}(2n\pi\iunit+\ln a)\right)}.
\end{equation}
When we consider the integral $\frac{\pi^2}{2\pi\iunit}\int\frac{z^s}{\left(1-aq^s\right)\sin^2(\pi s)}\intd s$,
we obtain the identity
\begin{equation}\label{PartFrac2b}
    \frac{\pi^2}{a\ln^2 q}\sum_{n=-\infty}^\infty
    \frac{z^{\frac{-1}{\ln q}(2n\pi\iunit +\ln a)}}{\sin^2\left(\frac\pi{\ln q}(2n\pi\iunit+\ln a)\right)}
    =\frac{\ln z}{a\ln q}\sum_{n=-\infty}^\infty\frac{z^n}{1-aq^n}
    +\sum_{n=-\infty}^\infty\frac{\left(qz\right)^n}{\left(1-aq^n\right)^2}.
\end{equation}
The details are very similar to the proof of \eqref{PartFrac2a}. The first sum on the right-hand side of
\eqref{PartFrac2b} can be identified via \eqref{PartFrac2} and when we take the $a$-derivative of
this result, we identify the final sum in \eqref{PartFrac2b}. In this way, we obtain
\begin{equation}\label{pKtheta3}
    \pK{\theta}{a}{z}=\left(az\right)^{\ln(a)/\ln(q)}
    \frac{\Thetaq{-az}\Thetaq{-a}}{
    \Thetaq{-z}\qpr{q}{q}{\infty}^3}\left(
    \frac{\ln z}{a\ln q}
    +\frac{\dThetaq{-a}}{\Thetaq{-a}}
    -z\frac{\dThetaq{-az}}{\Thetaq{-az}}
    \right).
\end{equation}
Using \eqref{PqMinus}, we can present this as \eqref{pKtheta1}.
\end{proof}

\begin{proof}[Proof for Theorem \ref{thmErrorbounds}]
We obtain from \cite[Eq.(2.3)]{Joshi2025} and the observation
that $\Eq{t}\Thetaq{t}=\mathcal{O}(1)$ as $t\to\infty$ that
\begin{equation}\label{growthEtheta}
    \qpr{-cqt}{q}{\infty}\Eq{qt}=t^{-\frac{\ln c}{\ln q}}\bigO(1),\qquad\frac{\qpr{-cqt}{q}{\infty}}{\Thetaq{qt}}=t^{-\frac{\ln c}{\ln q}}\bigO(1),
\end{equation}
as $t\to\infty$.

Take $N$ a positive integer. Cauchy-Heine integral representation \eqref{Bound0} gives
\begin{equation} \label{Bound2a}
    \UqK{\ee}{a}{b}{z}=\sum_{n=0}^{N-1}\frac{\qpr{a,b}{q}{n}}{\qpr{q}{q}{n}}q^{-\binom{n}{2}}
    \left(-z\right)^n+R_{N,\ee}(a,b;z),
\end{equation}
with
\begin{equation} \label{Bound3a}
    R_{N,\ee}(a,b;z)=\left(-z\right)^N K_0\int_0^\infty\frac{t^{-N}u(t)}{t+z}\K{qt}\intd t.
\end{equation}
It follows from \eqref{growthEtheta} that this integral representation is valid  for $N>1-\frac{\ln c}{\ln q}$.

In the case that $\Re z\geq 0$, we have for $t>0$ that $\left|t+z\right|^{-1}\leq t^{-1}$. Thus
\begin{equation}
    \begin{split}
        \left|\int_0^\infty\frac{t^{-N}u(t)}{t+z}\K{qt}\intd t\right|&\leq
        \int_0^\infty\frac{t^{-N}|u(t)|}{|t+z|}\K{qt}\intd t\\
        &\leq M_q(a,b)\int_0^\infty t^{-N}\qpr{-cqt}{q}{\infty}\K{t}\intd t\\
        &=\frac{M_q(a,b)q^{-\binom{N}{2}}}{\qpr{cq^N}{q}{\infty}},
    \end{split}
\end{equation}
where we have used entries (1) and (5) of Table \ref{tab:qLaplace}.

In the sector $\arg z\in(\frac\pi2,\pi)$, we write $z=|z|\expe^{\iunit(\zeta+\frac\pi2)}$, with
$\zeta\in (0,\frac\pi2)$. First we deal with the case $\K{t}=\Cq\Eq{t}$,
\begin{equation}
    \begin{split}
        &\left|\Cq\int_0^{\infty\expe^{\iunit\zeta}}\frac{t^{-N}u(t)}{t+z}\Eq{qt}\intd t\right|
        = \left|\Cq\int_0^{\infty}\frac{t^{-N}u(t\expe^{\iunit\zeta})}{t+z\expe^{-\iunit\zeta}}\Eq{qt\expe^{\iunit\zeta}}\intd t\right|\\
        &\qquad\qquad\qquad\leq\Cq\exp\left({\tfrac{-\zeta^2}{2\ln q}}\right)\int_0^\infty\frac{t^{-N}|u(t\expe^{\iunit\zeta})|}{|t+z\expe^{-\iunit\zeta}|}\Eq{qt}\intd t\\
        &\qquad\qquad\qquad\leq M_q(a,b)\exp\left({\tfrac{-\zeta^2}{2\ln q}}\right)\Cq\int_0^\infty t^{-N}\qpr{-cqt}{q}{\infty}\Eq{t}\intd t\\
        &\qquad\qquad\qquad=\frac{M_q(a,b)\exp\left({\frac{-\zeta^2}{2\ln q}}\right)q^{-\binom{N}{2}}}{\qpr{cq^N}{q}{\infty}}.
    \end{split}
\end{equation}
When $\K{t}=\frac{-1}{\ln(q)\Thetaq{t}}$,
\begin{equation}
    \begin{split}
        &\left|\frac{-1}{\ln q}\int_0^{\infty\expe^{\iunit\zeta}}\frac{t^{-N}u(t)}{t+z}\frac{1}{\Thetaq{qt}}\intd t\right|
        =\left|\frac{-1}{\ln q}\int_0^{\infty}\frac{t^{-N}u(t\expe^{\iunit\zeta})}{t+z\expe^{-\iunit\zeta}}\frac{1}{\Thetaq{qt\expe^{\iunit\zeta}}}\intd t\right|\\
        &\qquad\qquad\leq\frac{-1}{\ln q}\int_0^\infty\frac{t^{-N}|u(t\expe^{\iunit\zeta})|}{|t+z\expe^{-\iunit\zeta}|}\frac{1}{|\Thetaq{qt\expe^{\iunit\zeta}}|}\intd t\\
        &\qquad\qquad=\Cq\int_0^\infty\frac{t^{-N}|u(t\expe^{\iunit\zeta})|}{|t+z\expe^{-\iunit\zeta}|}\frac{|\Eq{qt\expe^{\iunit\zeta}}|}{|\Pq{qt\expe^{\iunit\zeta}}|}\intd t\\
        &\qquad\qquad\leq M_q(a,b)\exp\left({\tfrac{-\zeta^2}{2\ln q}}\right)\Cq\int_0^\infty t^{-N}\qpr{-cqt}{q}{\infty}\frac{\Eq{t}}{|\Pq{t\expe^{\iunit\zeta}}|}\intd t.
    \end{split}
\end{equation}
We use \eqref{PqReciprocal} to obtain a bound for $1/|\Pq{t\expe^{\iunit\zeta}}|$. We have
\begin{equation}
    \left|\cos\left(\frac{2\pi n}{\ln q}(\iunit\zeta+\ln t)\right)\right|
    \leq\expe^{-\frac{2\pi n|\zeta|}{\ln q}}
    \leq\expe^{-\frac{\pi^2n}{\ln q}}=\qhat^{-\frac{n}{2}}\quad\text{for}\quad\zeta\in\left(-\tfrac{\pi}{2},\tfrac{\pi}{2}\right),
\end{equation}
and note that $\widetilde{a}_n\leq1$ since the terms in expansion \eqref{tan} are alternating and monotonically decreasing in magnitude. Using these, we have
\begin{equation}
\begin{split}
    \frac{1}{|\Pq{t\expe^{\iunit\zeta}}|}&\leq\frac{1}{\qpr{\qhat^2}{\qhat^2}{\infty}^3}\left(|\widetilde{a}_0|+2\sum_{n=1}^\infty|\widetilde{a}_n|\qhat^n
    \left|\cos\left(\frac{2\pi n}{\ln q}(\iunit\zeta+\ln t)\right)\right|\right)\\
    &\leq\frac{1}{\qpr{\qhat^2}{\qhat^2}{\infty}^3}\left(1+2\sum_{n=1}^\infty\qhat^{\frac{n}{2}}\right)
    =\frac{1}{\qpr{\qhat^2}{\qhat^2}{\infty}^3}\frac{1+\sqrt{\qhat}}{1-\sqrt{\qhat}}.
\end{split}
\end{equation}
Therefore,
\begin{equation}
\begin{split}
    &\left|\frac{-1}{\ln q}\int_0^{\infty\expe^{\iunit\zeta}}\frac{t^{-N}u(t)}{t+z}
    \frac{1}{\Thetaq{qt}}\intd t\right|\\
    &\qquad\leq\frac{M_q(a,b)\exp\left({\frac{-\zeta^2}{2\ln q}}\right)}{\qpr{\qhat^2}{\qhat^2}{\infty}^3}\left(\frac{1+\sqrt{\qhat}}{1-\sqrt{\qhat}}\right)\Cq\int_0^\infty t^{-N}\qpr{-cqt}{q}{\infty}\Eq{t}\intd t\\
    &\qquad=\frac{M_q(a,b)\exp\left({\frac{-\zeta^2}{2\ln q}}\right)q^{-\binom{N}{2}}}{\qpr{cq^N}{q}{\infty}\qpr{\qhat^2}{\qhat^2}{\infty}^3}\left(\frac{1+\sqrt{\qhat}}{1-\sqrt{\qhat}}\right).
\end{split}
\end{equation}
In the case that $\arg z\in(-\pi,-\frac\pi2)$, we write $z=|z|\expe^{\iunit(\zeta-\frac\pi2)}$, with
$\zeta\in (-\frac\pi2,0)$ and obtain the same upper-bounds for the cases $\K{t}=\Cq\Eq{t}$
and $\K{t}=\frac{-1}{\ln(q)\Thetaq{t}}$.

Finally, for the case $\ee=\lambda$, we denote \eqref{Lsum5} as
\begin{equation} \label{Bound4a}
    \UqK{\lambda}{a}{b}{z}=K_0\sum_{m=-\infty}^\infty\frac{u(q^m\lambda)}{\Thetaq{q^{m+1}\lambda}}\frac{q^m\lambda}{q^m\lambda+z},
\end{equation}
Take $N$ a positive integer. We obtain from \eqref{Bound4a} that
\begin{equation} \label{Bound5}
    \UqK{\lambda}{a}{b}{z}=\sum_{n=0}^{N-1}\frac{\qpr{a,b}{q}{n}}{\qpr{q}{q}{n}}q^{-\binom{n}{2}}
    \left(-z\right)^n+R_{N,\lambda}(a,b;z),
\end{equation}
with
\begin{equation} \label{Bound6}
    R_{N,\lambda}(a,b;z)=\left(-z\right)^N K_0\sum_{m=-\infty}^\infty
    \frac{\left(q^m\lambda\right)^{-N}u(q^m\lambda)}{\Thetaq{q^{m+1}\lambda}}\frac{q^m\lambda}{q^m\lambda+z}.
\end{equation}
In the case that $\Re z\geq 0$, we have for $\lambda>0$ that $|q^n\lambda|/|q^n\lambda+z|\leq1$. Thus
\begin{equation}
\begin{split}
    &\left|\sum_{m=-\infty}^\infty\frac{(q^m\lambda)^{-N}u(q^m\lambda)}{\Thetaq{q^{m+1}\lambda}}\frac{q^m\lambda}{q^m\lambda+z}\right|\leq\sum_{m=-\infty}^\infty\frac{|q^m\lambda|^{-N}|u(q^m\lambda)|}{\Thetaq{q^{m+1}\lambda}}\frac{|q^m\lambda|}{|q^m\lambda+z|}\\
    &\qquad\qquad\qquad\qquad\qquad\qquad\leq\sum_{m=-\infty}^\infty\frac{(q^m\lambda)^{-N}|u(q^m\lambda)|}{\Thetaq{q^{m+1}\lambda}}\\
    &\qquad\qquad\qquad\qquad\qquad\qquad\leq M_q(a,b)\sum_{m=-\infty}^\infty\frac{(q^m\lambda)^{-N}\qpr{-cq^{m+1}\lambda}{q}{\infty}}{\Thetaq{q^{m+1}\lambda}}\\
    &\qquad\qquad\qquad\qquad\qquad\qquad=\frac{M_q(a,b)q^{-\binom{N}{2}}}{\qpr{cq^N}{q}{\infty}}.
\end{split}
\end{equation}
Finally, when $\left|\arg z\right|\in(\frac\pi2,\pi)$, we take $z=|z|\expe^{\iunit\xi}$.
We observe that the function $\left|\frac{r}{r+z}\right|^2=\frac{r^2}{r^2+|z|^2+2r|z||\cos\xi|}$
has a maximum when $|z|=-r\cos\xi$. It follows that 
$\left|\frac{r}{r+z}\right|\leq \frac{1}{|\sin\xi|}$. We use this in
\begin{equation}
\begin{split}
    &\left|\sum_{m=-\infty}^\infty\frac{(q^m\lambda)^{-N}u(q^m\lambda)}{\Thetaq{q^{m+1}\lambda}}\frac{q^m\lambda}{q^m\lambda+z}\right|\leq\sum_{m=-\infty}^\infty\frac{|q^m\lambda|^{-N}|u(q^m\lambda)|}{\Thetaq{q^{m+1}\lambda}}\frac{|q^m\lambda|}{|q^m\lambda+z|}\\
    &\qquad\qquad\qquad\qquad\qquad\qquad\leq \frac{M_q(a,b)}{|\sin\xi|}\sum_{m=-\infty}^\infty\frac{(q^m\lambda)^{-N}\qpr{-cq^{m+1}\lambda}{q}{\infty}}{\Thetaq{q^{m+1}\lambda}}\\
    &\qquad\qquad\qquad\qquad\qquad\qquad=\frac{M_q(a,b)q^{-\binom{N}{2}}}{\qpr{cq^N}{q}{\infty}|\sin\xi|}.
\end{split}
\end{equation}
\end{proof}



\end{document}